\renewcommand{\(}{\left(}
\renewcommand{\)}{\right)}
\renewcommand{\]}{\right]}
\newcommand{\N}{\mathbb{N}}
\newcommand{\End}[1]{{\rm{End}}}
\renewcommand{\log}[1]{{\rm{log}}#1}
\newtheorem{lemma}{Lemma}
\newtheorem{definition}{Definition}
\newtheorem{theorem}{Theorem}
\newtheorem{corollary}{Corollary}
\newtheorem{rem}{Remark}
\newlength{\depthofsumsign}
\newcommand{\nsum}[1][1.4]{
    \mathop{%
        \raisebox
            {-#1\depthofsumsign+1\depthofsumsign}
            {\scalebox
                {#1}
                {$\displaystyle\sum$}%
            }
    }
}
\newcommand{\nprod}[1][1.4]{
    \mathop{%
        \raisebox
            {-#1\depthofsumsign+1\depthofsumsign}
            {\scalebox
                {#1}
                {$\displaystyle\prod$}%
            }
    }
}
\begin{document}
\title{Large Deviations of Convex Polyominoes}

\author[1]{Ilya Soloveychik\thanks{This work was supported by the Fulbright Foundation and Office of Navy Research grant N00014-17-1-2075.}}
\author[2]{Vahid Tarokh}
\affil[1]{\normalsize John A. Paulson School of Engineering and Applied Sciences, Harvard University}
\affil[2]{\normalsize Department of Electrical and Computer Engineering, Duke University}
\maketitle

\begin{abstract}
Enumeration of various types of lattice polygons and in particular polyominoes is of primary importance in many machine learning, pattern recognition, and geometric analysis problems. In this work, we develop a large deviation principle for convex polyominoes under different restrictions, such as fixed area and/or perimeter. 
\end{abstract}

\begin{IEEEkeywords}
Large deviation principle, convex polyominoes, Young diagrams, pattern recognition.
\end{IEEEkeywords}

\section{Introduction}
The paramount interest to the theoretical and practical aspects of machine learning and pattern recognition over the last ten years has led to many important breakthroughs in the underlying algorithms and techniques. One of the most prominent tools allowing to asses the quality of these algorithms consists in derivation of lower theoretic bounds serving as benchmarks in comparative studies. Such bounds quantify the best achievable performance (e.g. Cramer-Rao bounds), sample, computational, or algorithmic complexity in the problem at hand, etc. The most critical component of many of such results is the estimation of the number of admissible models one of which is to be chosen as the outcome of the learning process. For example, in pattern recognition the goal is to select a pattern from a family of models best suiting the input data under some performance criteria. The sample complexity of such model selection is controlled by the richness of the set of eligible models, see e.g. \cite{santhanam2012information, anandkumar2012high, soloveychik2018region}. In its turn, the problem of counting the cardinalities of model classes in this scenario is equivalent to the problem of enumeration of certain geometric shapes.

Enumeration of geometric shapes has been one of the central combinatorial problems for a long time \cite{vershik2004limit, guttmann2009polygons}. Recently discovered connections to the theory of random partitions and concentration of measure have drawn attention of many scientists to this area. Let us start by mentioning the fundamental works of Vershik, Blinovskii, Dembo and Zeitouni \cite{vershik1987statistical, blinovskii1999large, dembo1998large}, who developed large deviation principle for integer partitions. In particular they showed that the boundaries of the $\frac{1}{\sqrt{n}}$-scaled Young diagrams corresponding to the partitions of the integer $n$ endowed with the uniform measure concentrate around a non-random limiting curve. In \cite{vershik1987statistical} Vershik calculated the exact shape of the curve. Given an arbitrary curve satisfying some natural regularity conditions, \cite{blinovskii1999large, dembo1998large} derived the exact speed and rate function controlling the number of scaled Young diagrams in a small vicinity of the curve. This line of research was further extended by other mathematicians to different setups and conditions. In \cite{dembo1998large} a large deviation principle for strict partitions was derived, in \cite{vershik1994limit, vershik1999large} -  for convex polygons on an integer lattice, etc. In some cases only the limiting curve was obtained without the large deviation principle, e.g. the case of restricted and boxed partitions \cite{petrov2009two}. 

Consider the integer lattice on the $\mathbb{R}^2$ plane. A lattice \textit{polyomino} is a union of elementary lattice cells which must be joined at their sides \cite{guttmann2009polygons}. A polyomino is said to be \textit{column-convex} in a given lattice direction if all the cells along any line in that direction are connected through cells in the same line. A polyomino on the integer lattice is \textit{convex} if it is column-convex in both horizontal and vertical directions. One of the main problems in the field of convex polyominoes is their enumeration \cite{guttmann2009polygons}. There exists a large body of literature addressing the problem of polyomino counting according to their perimeter and/or area \cite{delest1988generating, bousquet1992convex, bousquet1996method, bousquet1995generating}. However, in all these works the desired numbers are given implicitly as coefficients of the corresponding terms in the series expansions of the generating functions derived therein. These series are usually too complicated and bulky to be directly analyzed and the sought for coefficients cannot be easily extracted. Moreover, even the asymptotic behavior of these coefficients is by no means obvious to derive. Using the ideas from \cite{blinovskii1999large}, in this work we develop a large deviation principle for convex polyominoes with different constraints, such as perimeter, area, or both. To the best of our knowledge the English version of \cite{blinovskii1999large} published in Problems of Information Transmission in 1999 is not in open access, therefore, for completeness we repeat the main arguments from this seminal paper in our proofs in Section \ref{sec:proofs}. Interestingly, our findings generalize some of the results in the works devoted to the study of equilibrium shapes of convex polyominoes of fixed perimeter under different pressure \cite{mitra2008asymptotic, mitra2010asymptotic}.

The rest of the text is organized as follows. First we introduce the large deviation principle and the necessary notation in Section \ref{sec:ldp_def}. In Section \ref{sec:conv_polyominoes} we define convex polyominoes and discuss their geometric properties. We formulate the main results in Section \ref{sec:main_res}. In Section \ref{seq:appl} we discuss an example of application of the obtained results. Finally, Section \ref{sec:proofs} contains the proofs.

\section{The Large Deviation Principle}
\label{sec:ldp_def}
In this section, we introduce the notion of Large Deviation Principle (LDP). Our main result concerning the enumeration of convex polyominoes will be formulated in terms of LDP. Let $\mathcal{P}$ be a Polish space (complete separable metric space). Given $B \subset \mathcal{P}$, denote by $B^0$ the interior of $B$ and by $\bar{B}$ its closure.
\begin{definition}
A sequence $\{\mathbb{P}_n\}_{n=1}^\infty$ of probability measures on $\mathcal{P}$ satisfies a Large Deviation Principle with speed $a_n$ and rate function $I$ if
\begin{equation}
-\inf_{b \in B^0}I(b) \leqslant \liminf_{n \to \infty} \frac{\log\, \mathbb{P}_n(B)}{a_n} \leqslant \limsup_{n \to \infty} \frac{\log\,\mathbb{P}_n(B)}{a_n} \leqslant -\inf_{b \in \bar{B}}I(b),\quad \forall B \subset \mathcal{P},
\end{equation}
where $I : \mathcal{P} \to \overline{\mathbb{R}}_+$ is lower semi-continuous (its level sets $L(M) = \{b\in \mathcal{P} | I(b)\leqslant M\}$ are closed for any $M\geqslant 0$). If $L(M)$ are compact, we refer to $I$ as a good rate function.
\end{definition}

Given an element $\gamma \in \mathcal{P}$, let $U_\varepsilon(\gamma)$ be its $\varepsilon$-vicinity. In addition to the LDP we also formulate the so-called local LDP.
\begin{definition}
Assume that for all $\gamma \in \mathcal{P}$,
\begin{equation}
\liminf_{\varepsilon \to 0}\liminf_{n \to \infty} \frac{\log\, \mathbb{P}_n(U_\varepsilon(\gamma))}{a_n} =\limsup_{\varepsilon \to 0}\limsup_{n \to \infty} \frac{\log\, \mathbb{P}_n(U_\varepsilon(\gamma))}{a_n} = -I(\gamma),
\end{equation}
then we say that $\mathbb{P}_n$ satisfies the local LDP.
\end{definition}

The last definition can be roughly interpreted as
\begin{equation}
\mathbb{P}_n(U_\varepsilon(\gamma)) \sim e^{-a_nI(\gamma)}.
\end{equation}

\section{Convex Polyominoes}
\label{sec:conv_polyominoes}
\begin{figure}[!t]
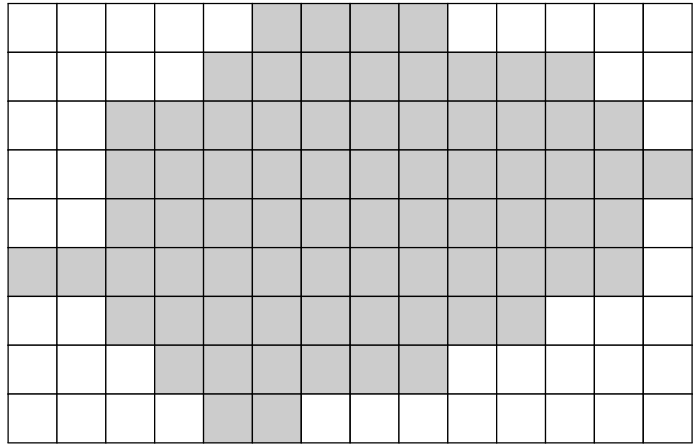

\centering
 \begin{equation*}
 \ytableausetup{textmode}
\begin{ytableau}
*(black!0) & *(black!0) & *(black!0) & *(black!0) & *(black!0) & *(black!20) & *(black!20) & *(black!20) & *(black!20) & *(black!0) & *(black!0) & *(black!0) & *(black!0) & *(black!0) \\
*(black!0) & *(black!0) & *(black!0) & *(black!0) & *(black!20) & *(black!20) & *(black!20) & *(black!20) & *(black!20) & *(black!20) & *(black!20) & *(black!20) & *(black!0) & *(black!0) \\
*(black!0) & *(black!0) & *(black!20) & *(black!20) & *(black!20) & *(black!20) & *(black!20) & *(black!20) & *(black!20) & *(black!20) & *(black!20) & *(black!20) & *(black!20) & *(black!0) \\
*(black!0) & *(black!0) & *(black!20) & *(black!20) & *(black!20) & *(black!20) & *(black!20) & *(black!20) & *(black!20) & *(black!20) & *(black!20) & *(black!20) & *(black!20) & *(black!20) \\
*(black!0) & *(black!0) & *(black!20) & *(black!20) & *(black!20) & *(black!20) & *(black!20) & *(black!20) & *(black!20) & *(black!20) & *(black!20) & *(black!20) & *(black!20) & *(black!0) \\
*(black!20) & *(black!20) & *(black!20) & *(black!20) & *(black!20) & *(black!20) & *(black!20) & *(black!20) & *(black!20) & *(black!20) & *(black!20) & *(black!20) & *(black!20) & *(black!0) \\
*(black!0) & *(black!0) & *(black!20) & *(black!20) & *(black!20) & *(black!20) & *(black!20) & *(black!20) & *(black!20) & *(black!20) & *(black!20) & *(black!0) & *(black!0) & *(black!0) \\
*(black!0) & *(black!0) & *(black!0) & *(black!20) & *(black!20) & *(black!20) & *(black!20) & *(black!20) & *(black!20) & *(black!0) & *(black!0) & *(black!0) & *(black!0) & *(black!0) \\
*(black!0) & *(black!0) & *(black!0) & *(black!0) & *(black!20) & *(black!20) & *(black!0) & *(black!0) & *(black!0) & *(black!0) & *(black!0) & *(black!0) & *(black!0) & *(black!0) \\
\end{ytableau}
\end{equation*}
\caption{\footnotesize A convex polyomino.}
\label{fig:conv_polyomino}
\end{figure}

Given the integer lattice on $\mathbb{R}^2$, a lattice \textit{polyomino} is a union of elementary lattice cells which must be joined at their sides and not just at nodes \cite{guttmann2009polygons}, such as e.g. the cells colored gray in Figure \ref{fig:conv_polyomino}. A polyomino is said to be \textit{column-convex} in a given lattice direction if all the cells along any line in that direction are connected through cells in the same line. A polyomino on the integer lattice is \textit{convex} if it is column-convex in both horizontal and vertical directions.
\begin{lemma}[\cite{guttmann2009polygons}]
\label{lem:conv_circ}
A square lattice polyomino is convex if and only if its perimeter coincides with the perimeter of its circumscribed rectangle.
\end{lemma}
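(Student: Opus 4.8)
The plan is to prove both directions at once by establishing a single sharp inequality between the perimeter of the polyomino and that of its circumscribed rectangle, in which equality holds precisely when the polyomino is convex. Let the minimal axis-aligned circumscribed rectangle have width $m$ and height $n$, so its perimeter is $2(m+n)$. I would first record that, by minimality of the circumscribing rectangle together with the connectedness of a polyomino, each of the $m$ columns and each of the $n$ rows meeting the rectangle contains at least one cell: an entirely empty interior column or row would separate the figure into two non-adjacent pieces and so disconnect it.

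The central step is an edge-counting decomposition of the perimeter $P$. I would split the boundary into horizontal unit edges (tops and bottoms of cells) and vertical unit edges (left and right sides), writing $P = P_h + P_v$. Scanning a fixed column $j$ from bottom to top, a horizontal edge lies on the boundary exactly when it separates a filled cell from an empty one or from the exterior; hence the number of boundary horizontal edges in that column is twice the number $v_j$ of maximal vertical runs of cells in it. Summing over columns yields $P_h = 2\sum_{j=1}^m v_j$, and by the symmetric argument on rows $P_v = 2\sum_{i=1}^n h_i$, where $h_i$ counts the maximal horizontal runs in row $i$. I would emphasize that this identity counts all boundary edges, including any surrounding holes, so no convexity is presumed here.

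Because every column and every row is nonempty, $v_j \ge 1$ and $h_i \ge 1$, giving
\begin{equation}
P = 2\sum_{j=1}^m v_j + 2\sum_{i=1}^n h_i \ge 2m + 2n = 2(m+n),
\end{equation}
which is exactly the perimeter of the circumscribed rectangle. Equality forces $v_j = 1$ for every column and $h_i = 1$ for every row, which is precisely the statement that the polyomino is column-convex in both the vertical and horizontal directions, i.e., convex; conversely, convexity makes every $v_j$ and $h_i$ equal to $1$ and hence produces equality. This would close the equivalence.

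The hard part will be the careful verification of the edge-counting identity $P_h = 2\sum_j v_j$, in particular fixing the boundary conventions at the top and bottom of each column (so that the lowest and highest runs are counted correctly) and confirming that configurations with holes, which force some $v_j \ge 2$, are handled automatically by the run count. Once this bookkeeping is pinned down, the inequality and its equality case follow immediately, and the remainder of the argument is routine.
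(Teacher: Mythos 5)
Your argument is correct, but there is nothing in the paper to compare it against: Lemma \ref{lem:conv_circ} is stated as a quotation from \cite{guttmann2009polygons} and used as a black box, with no proof given in the text. Your edge-counting proof is a sound, self-contained substitute. The key identity $P = P_h + P_v = 2\sum_{j} v_j + 2\sum_{i} h_i$ is valid: each horizontal boundary edge lies in exactly one column, and scanning that column (with the exterior counted as empty below and above) the boundary edges are exactly the filled/empty transitions, of which there are $2v_j$; this bookkeeping automatically covers holes, as you note. Nonemptiness of every row and column of the minimal bounding rectangle follows from minimality at the extremes and connectedness in the interior, so $P \geqslant 2(m+n)$, and equality holds precisely when every $v_j$ and every $h_i$ equals $1$, which is verbatim the paper's definition of convexity (column-convexity in both lattice directions). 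Beyond establishing the lemma, your route yields the quantitative fact that \emph{every} polyomino satisfies $P \geqslant 2(m+n)$ with equality characterizing convexity; this is strictly more information than the cited statement, and combined with $A \leqslant mn \leqslant (m+n)^2/4$ it would prove the paper's Lemma \ref{lem:isoper_ineq} for arbitrary polyominoes in one line, rather than via the reduction to the convex case that the paper's proof of that lemma leaves implicit.
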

Figure \ref{fig:conv_polyomino} shows an example of a convex polyomino on a square lattice and its circumscribed rectangle. In the discrete scenario we have the following analog of the isoperimetric inequality.

\begin{lemma}[Isoperimeteric inequality on the square lattice]
\label{lem:isoper_ineq}
For a polyomino of area $A$ and perimeter $L$ on the square lattice,
\begin{equation}
\label{eq:isoper_ineq}
A \leqslant \frac{L^2}{16},
\end{equation}
the equality is reached when the polyomino is a square.
\end{lemma}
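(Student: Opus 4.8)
The plan is to reduce everything to the circumscribed rectangle of the polyomino and then invoke the arithmetic--geometric mean inequality. Let the circumscribed (minimal axis-aligned bounding) rectangle have width $m$ and height $n$, measured in lattice cells, so that its area is $mn$ and its perimeter is $2(m+n)$. Since the polyomino is contained in this rectangle, its area immediately obeys $A \leqslant mn$. The crux is therefore the complementary bound on the perimeter, namely that $L \geqslant 2(m+n)$: a polyomino can never have smaller perimeter than its bounding box.

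To establish $L \geqslant 2(m+n)$ I would split the boundary into its horizontal and vertical unit edges, $L = L_h + L_v$, and bound each piece separately. First I would argue, using connectivity of the polyomino, that every one of the $n$ rows spanned by the bounding rectangle actually contains at least one cell (a completely empty intermediate row would disconnect the cells lying above it from those below). Within a fixed row the occupied cells form one or more maximal horizontal runs, and each such run contributes two vertical boundary edges --- the left edge of its first cell and the right edge of its last cell --- since by maximality the neighbouring cells in that row are absent. Hence each occupied row supplies at least two distinct vertical edges, giving $L_v \geqslant 2n$; the symmetric column-by-column argument gives $L_h \geqslant 2m$, and adding these yields $L \geqslant 2(m+n)$.

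With both bounds in hand the inequality follows in one line. From $L \geqslant 2(m+n)$ we obtain $m+n \leqslant L/2$, and the AM--GM inequality gives $mn \leqslant \left(\tfrac{m+n}{2}\right)^2 \leqslant \left(\tfrac{L}{4}\right)^2 = L^2/16$. Combining with $A \leqslant mn$ produces the claimed $A \leqslant L^2/16$. For the equality case I would trace back the three inequalities used: $A = mn$ forces the polyomino to fill its bounding rectangle completely, the AM--GM equality condition $(m-n)^2 = 0$ forces $m = n$, and these together identify the extremal shape as an $m \times m$ square, for which indeed $L = 4m$ and $A = m^2 = L^2/16$; conversely any square attains equality.

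I expect the only real obstacle to be the perimeter bound $L \geqslant 2(m+n)$, and specifically the careful bookkeeping of boundary edges per row and per column together with the use of connectivity to rule out empty interior rows. Everything downstream is elementary. One may alternatively read $L \geqslant 2(m+n)$ off Lemma \ref{lem:conv_circ}: since equality of the polyomino's perimeter with that of the circumscribed rectangle characterises convexity, the bounding-box perimeter is the minimal achievable value. Nonetheless I would still include the direct edge-counting argument to keep the proof self-contained.
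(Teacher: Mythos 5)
Your proof is correct, and it takes a genuinely more self-contained route than the paper's. The paper's own proof asserts first that it suffices to treat convex polyominoes, then invokes Lemma \ref{lem:conv_circ} to identify the perimeter $L$ of a convex polyomino with that of its circumscribed rectangle, bounds the area by the rectangle's area, and finishes by noting that among rectangles of perimeter $L$ the square has maximal area. Your proof shares that final optimization step (made explicit via AM--GM), but replaces both the reduction to convex polyominoes and the appeal to Lemma \ref{lem:conv_circ} with a direct proof of the universal bound $L \geqslant 2(m+n)$: by connectivity every row and every column of the bounding box is occupied, and each maximal run of cells contributes two boundary edges, giving $L_v \geqslant 2n$ and $L_h \geqslant 2m$. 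This is where your argument buys something real: the paper's opening claim that one ``must only prove'' the inequality for convex polyominoes is left unjustified, and the natural justification is exactly the inequality $L \geqslant 2(m+n)$ that you establish; your version therefore handles arbitrary (possibly non-convex) polyominoes directly and closes the gap the paper leaves implicit. What the paper's route buys is brevity, since Lemma \ref{lem:conv_circ} is already quoted from the literature; what yours buys is self-containment, a complete treatment of the non-convex case, and an equality analysis carried out in both directions rather than only asserting that the square attains the bound.
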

\begin{proof}
We must only prove (\ref{eq:isoper_ineq}) for convex polyominoes. Due to Lemma \ref{lem:conv_circ}, the perimeter of the circumscribed rectangle of a convex polyomino of perimeter $L$ is also $L$. Clearly, the area of such a polyomino is maximized when it coincides with its circumscribed rectangle. Among the rectangles of perimeter $L$, the area is maximal for the square, which completes the proof.
\end{proof}

\section{Large Deviation Principle for Convex Polyominoes} 
\label{sec:main_res}
Consider the plane $\mathbb{R}^2$ with the standard basis and fixed origin. Assume that we are given a closed piece-wise differentiable curve $\Gamma \subset \mathbb{R}^2$ which is \textit{unimodal} in both vertical and horizontal directions. In other words, every horizontal and vertical line intersects the curve in at most two points. Denote the region embraced by $\Gamma$ by $G$ and its area by
\begin{equation}
\text{area}(G) = A.
\end{equation}
For convenience, let us assume that the barycenter of $G$ coincides with the coordinate origin.\footnote{A generalization to the case where the boundary of $\Gamma$ may contain vertical and horizontal segments in such a way that any horizontal and vertical line would cross $G$ along one contiguous segment is straightforward. We stick to the unimodal case to make the proofs in Section \ref{sec:proofs} less technically involved.} Given two curves $\Gamma_1$ and $\Gamma_2$, the distance between them is defined as
\begin{equation}
d(\Gamma_1,\Gamma_2) = \text{area}(G_1\Delta G_2).
\end{equation}

For every $n \in \mathbb{N}$ we construct the integer lattice centered at the origin and scale it by $\frac{1}{\sqrt{n}}$ so that the area of every elementary cell becomes $\frac{1}{n}$. Consider the set of convex polyominoes in the $\varepsilon$-vicinity of $\Gamma$, which we denote by
\begin{equation}
\mathbb{Q}_n = \mathbb{M}_n \cap U_{\varepsilon}(\Gamma),
\end{equation}
where $\mathbb{M}_n$ is the set of all convex polyominoes on the $\frac{1}{\sqrt{n}}$-grid.

Our goal will be to count the polyominoes in $\mathbb{Q}_n$ satisfying different conditions. For example, the polyominoes in $\mathbb{Q}_n$ having fixed area $\mathbb{Q}_{A}$\footnote{We suppress the $n$ index to simply the notation.}, fixed perimeter $\mathbb{Q}_{L}$, or both fixed area and perimeter $\mathbb{Q}_{A,L}$, etc. Denote
\begin{equation}
Q_X = |\mathbb{Q}_{X}|,\quad X = A,L,\{A,L\}.
\end{equation}


\begin{rem}
By convention, below we write
\begin{equation}
\int_{\Gamma}f(\Gamma)(|dx|+|dy|) = \int_{\Gamma}f(\Gamma(s))(|\sin(\theta)|+|\cos(\theta)|)ds,
\end{equation}
where $\Gamma(s)$ is the natural parameterization of the curve by its arc length and $\theta = \arctan(y')$ is the angle between the tangent line at any point and the horizontal axis.
\end{rem}

On the sets of the form $U_\varepsilon(\Gamma)$ define our measures $\mathbb{P}_{X,n}$ as
\begin{equation}
\mathbb{P}_{X,n}(U_\varepsilon(\Gamma)) = \frac{Q_X(U_\varepsilon(\Gamma))}{V_X}, \quad X = A,L,\{A,L\}.
\end{equation}
where $V_X$ is the total number of convex polyominoes of type $X$.

For concreteness, let us first consider convex polyominoes of fixed area, $X=A$ and denote $\mathbb{P}_{n} = \mathbb{P}_{A,n}$. Below we explain that the other cases are treated analogously.

\begin{figure}[!t]
\centering
\includegraphics[width=3.2in]{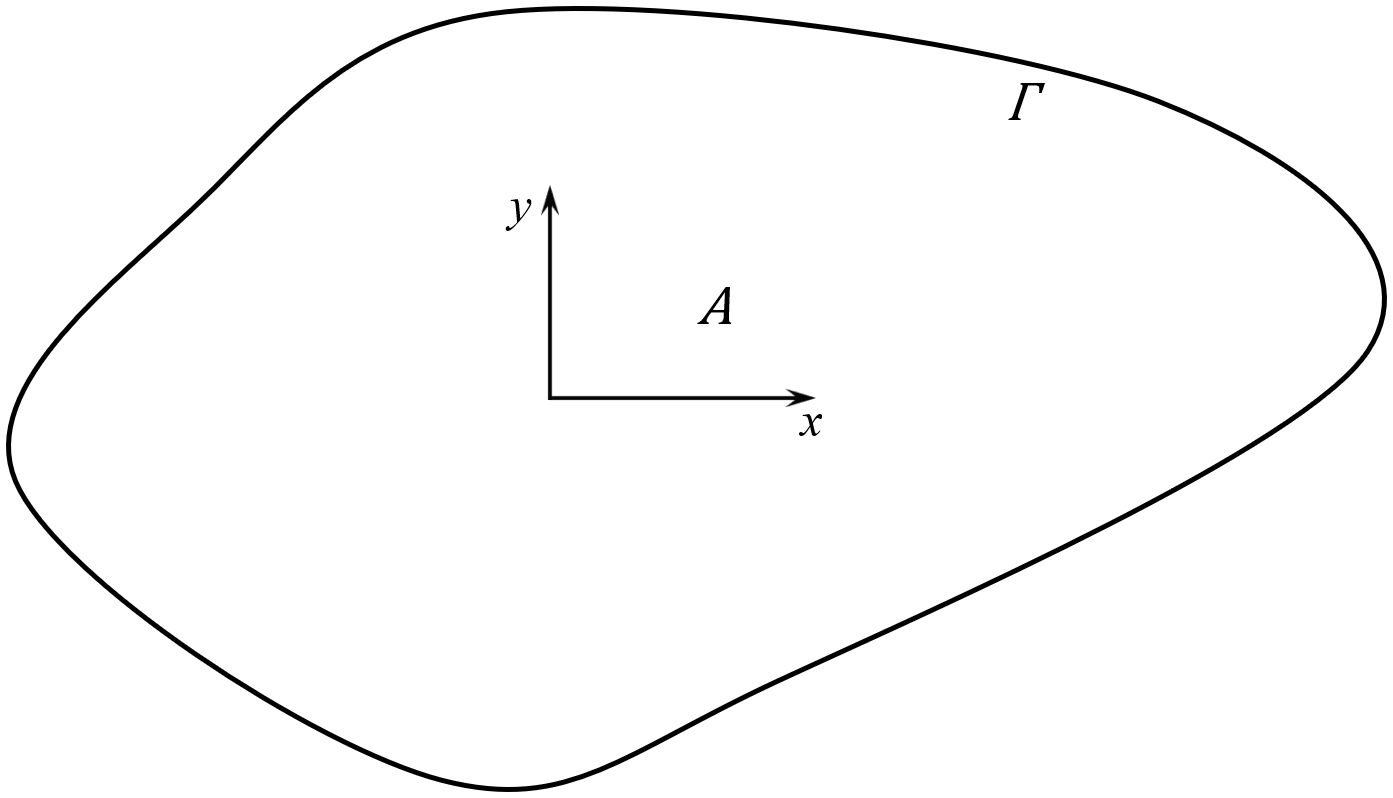}
\vspace{-0.2cm}
\caption{\footnotesize The original unimodal curve $\Gamma \subset \mathbb{R}^2$.}
\label{fig:dens_13}
\end{figure}

\begin{theorem}[LDP for Convex Polyominoes]
\label{thm:ldp_main}
Let $\Gamma$ be a unimodal in the vertical and horizontal directions piece-wise differentiable curve embracing a region of area $A$. Then the sequence $\{\mathbb{P}_{n}\}_n$ satisfies the local LDP with speed $\sqrt{n}$ and good rate function
\begin{equation}
I(\Gamma) = C_A - \int_{\Gamma}H\(\frac{|y'|}{1+|y'|}\)(|dx|+|dy|) = C_A - \int_{\Gamma}(1+|\tan \Gamma|)H\(\frac{1}{1+|\cot \Gamma|}\)|dx|,
\end{equation}
where $H(u)=-u\,\log_2 u - (1-u)\log_2 (1-u)$ is the binary entropy\footnote{Below we suppress the subscript of the logarithm for brevity.}, $y=y(x)$ is the local parametrization of the curve, and $C_A$ is the normalization constant (log-partition function in the statistical mechanics terminology).
\end{theorem}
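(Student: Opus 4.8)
The plan is to reduce the enumeration of convex polyominoes in $U_\varepsilon(\Gamma)$ to the counting of monotone lattice paths, and then to read off the exponential growth rate of the latter from Stirling's formula. The starting point is the geometry of convex polyominoes: traversing the boundary counter-clockwise and cutting it at the four extreme cells (leftmost, rightmost, topmost, bottommost), one obtains four staircase paths, each monotone in both coordinates. By Lemma~\ref{lem:conv_circ} these are exactly the pieces of the boundary joining consecutive sides of the circumscribed rectangle, so a convex polyomino is determined by its circumscribed rectangle together with its four monotone paths. Since $\Gamma$ is unimodal it likewise splits, at its four extreme points, into four arcs that are monotone in both coordinates; a polyomino lies in $U_\varepsilon(\Gamma)$ precisely when each of its staircase paths stays in the $O(\varepsilon)$-tube around the matching arc of $\Gamma$. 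To leading exponential order the four contributions are independent, so $\log{Q_A(U_\varepsilon(\Gamma))}$ is a sum of four path-counting terms whose union is an integral over all of $\Gamma$.

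First I would fix a finite partition of the horizontal axis into intervals $[x_i,x_{i+1}]$ and approximate the corresponding arc of $\Gamma$ by a segment of slope $y'$. A monotone path following this arc within an $O(\varepsilon)$-tube must use $p\approx\sqrt{n}\,(x_{i+1}-x_i)$ horizontal and $q\approx\sqrt{n}\,|y(x_{i+1})-y(x_i)|$ vertical unit steps, these counts being pinned by the endpoints up to $o(\sqrt{n})$. The number of such paths is $\binom{p+q}{p}$ up to the tube restriction, and Stirling's formula gives $\log\binom{p+q}{p}=(p+q)\,H\!\left(\frac{p}{p+q}\right)+O(\log{\sqrt{n}})$. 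Substituting $\frac{p}{p+q}=\frac{1}{1+|y'|}$ and $p+q=\sqrt{n}\,(1+|y'|)(x_{i+1}-x_i)$, and using the symmetry $H(u)=H(1-u)$, the local log-count becomes $\sqrt{n}\,(1+|y'|)\,H\!\left(\frac{|y'|}{1+|y'|}\right)(x_{i+1}-x_i)$ to leading order.

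Summing these local estimates over the partition and then refining it turns the sum into a Riemann integral, yielding
\begin{equation*}
\frac{1}{\sqrt{n}}\log{Q_A(U_\varepsilon(\Gamma))}\ \longrightarrow\ \int_{\Gamma}(1+|y'|)\,H\!\left(\frac{|y'|}{1+|y'|}\right)|dx|=\int_{\Gamma}H\!\left(\frac{|y'|}{1+|y'|}\right)(|dx|+|dy|)
\end{equation*}
as $n\to\infty$ and then $\varepsilon\to0$, the last equality being just $(1+|y'|)|dx|=|dx|+|dy|$. Defining $C_A=\lim_n\frac{1}{\sqrt{n}}\log{V_A}$, the normalization $\mathbb{P}_n(U_\varepsilon(\Gamma))=Q_A/V_A$ gives $\frac{1}{\sqrt{n}}\log{\mathbb{P}_n(U_\varepsilon(\Gamma))}\to-I(\Gamma)$ with the asserted rate function; since the upper and lower estimates coincide, this is precisely the local LDP. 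Lower semicontinuity and compactness of the level sets of $I$ would then follow from coercivity of the entropy functional on the space of unimodal curves equipped with the area distance $d$.

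The \emph{main obstacle} is to verify that the tube restriction and the two discretization steps alter only sub-exponential factors. Concretely I expect the hard part to be twofold: (i) showing that forcing each path to remain in the $O(\varepsilon)$-tube, rather than merely to hit the prescribed endpoints, discards only an $e^{o(\sqrt{n})}$ fraction of the $\binom{p+q}{p}$ paths --- a law-of-large-numbers statement for the random monotone path, made uniform over the partition; and (ii) handling the exact-area constraint, which couples the four paths and the rectangle position. For the \emph{local} principle the latter is benign: among polyominoes in $U_\varepsilon(\Gamma)$ the area is automatically within $O(\varepsilon)$ of $A$, so restricting to the single admissible value costs at most a polynomial-in-$\sqrt{n}$ factor, invisible at speed $\sqrt{n}$; turning this into a rigorous local-CLT / count-of-achievable-areas estimate is the technical crux.
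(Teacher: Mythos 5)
Your proposal follows essentially the same route as the paper's proof: cut the polyomino boundary and $\Gamma$ at their four extreme points into monotone pieces, count monotone lattice paths with pinned endpoints via Lemma~\ref{lem:count_diagr} and the entropy bound (\ref{eq:stril_b}), pass from the partition sum to a Riemann integral, and normalize by $V_A$ to obtain the rate function. The two obstacles you flag---controlling the tube restriction and the exact-area constraint---are precisely what the paper resolves in Lemma~\ref{lem:up_b_q_n} and Lemma~\ref{lem:lower_b_q_n} (by pinning the path at partition points to within $\gamma(\varepsilon)$, respectively $1/\sqrt{n}$, of the curve, and by spreading the small area excess above the constructed diagram while staying inside $U_{\varepsilon}(y)$), so your outline matches the paper's proof structure, with those lemmas supplying the deferred technical work.
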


As an immediate corollary, we obtain the following statement, where we count the actual number of the polyominoes and not the (normalized) probability. This allows us to get rid of the constant $C_A$.
\begin{corollary}
\label{cor:main_cor}
The number of convex polyominoes of area $A$ inside $U_\varepsilon(\Gamma)$ satisfies
\begin{align}
\liminf_{\varepsilon \to 0}&\liminf_{n \to \infty} \frac{\log\, Q_A\(U_\varepsilon(\Gamma)\)}{\sqrt{n}} = \limsup_{\varepsilon \to 0}\limsup_{n \to \infty} \frac{\log\, Q_A\(U_\varepsilon(\Gamma)\)}{\sqrt{n}} \nonumber \\
& =  \int_{\Gamma}H\(\frac{|y'|}{1+|y'|}\)(|dx|+|dy|) = \int_{\Gamma}(1+|\tan \Gamma|)H\(\frac{1}{1+|\cot \Gamma|}\)|dx|.
\end{align}
\end{corollary}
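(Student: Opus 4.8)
The plan is to obtain the corollary directly from Theorem~\ref{thm:ldp_main} by inverting the normalization that converted polyomino counts into probabilities. By construction $\mathbb{P}_{n}(U_\varepsilon(\Gamma)) = Q_A(U_\varepsilon(\Gamma))/V_A$, so taking logarithms and dividing by the speed $\sqrt{n}$ yields the exact identity
$$\frac{\log\, Q_A(U_\varepsilon(\Gamma))}{\sqrt{n}} = \frac{\log\, \mathbb{P}_{n}(U_\varepsilon(\Gamma))}{\sqrt{n}} + \frac{\log\, V_A}{\sqrt{n}},$$
valid for every $n$ and every $\varepsilon$. The first summand is precisely the object controlled by the local LDP, while the second depends on neither $\varepsilon$ nor $\Gamma$.

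First I would dispose of the second summand. Since $V_A$ is the total number of area-$A$ convex polyominoes on the $\frac{1}{\sqrt{n}}$-grid, its exponential growth rate is by definition the normalization constant (log-partition function) of Theorem~\ref{thm:ldp_main}, i.e. $\lim_{n\to\infty}\log\, V_A/\sqrt{n} = C_A$. Being constant in $(\varepsilon,\Gamma)$, this term simply passes through both iterated limits as the additive constant $C_A$.

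Next I would apply Theorem~\ref{thm:ldp_main} to the first summand: both its double liminf (over $\varepsilon\to0$ then $n\to\infty$) and its double limsup equal $-I(\Gamma)=-C_A+\int_{\Gamma}H\(\frac{|y'|}{1+|y'|}\)(|dx|+|dy|)$. Adding the two contributions cancels $C_A$ and leaves
$$\liminf_{\varepsilon\to0}\liminf_{n\to\infty}\frac{\log\, Q_A(U_\varepsilon(\Gamma))}{\sqrt{n}}=\limsup_{\varepsilon\to0}\limsup_{n\to\infty}\frac{\log\, Q_A(U_\varepsilon(\Gamma))}{\sqrt{n}}=\int_{\Gamma}H\(\frac{|y'|}{1+|y'|}\)(|dx|+|dy|),$$
which is the assertion; the second displayed form in the statement follows from the same pointwise reparametrization $|dy|=|\tan\Gamma|\,|dx|$ already used in the theorem.

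The hard part is really only the identification $\lim_{n}\log\, V_A/\sqrt{n}=C_A$; everything else is bookkeeping on iterated limits of a sum one of whose terms is constant in $(\varepsilon,\Gamma)$. If $C_A$ is taken to be this growth rate (as the phrase ``log-partition function'' indicates), the corollary is immediate. If instead one insists on the variational description $C_A=\sup_{\Gamma'}\int_{\Gamma'}H(\cdot)(|dx|+|dy|)$ over admissible curves of area $A$, I would recover the growth rate of $V_A$ by a Laplace/Varadhan-type argument — covering the space of limiting curves by finitely many $\varepsilon$-vicinities, applying the local estimate of Theorem~\ref{thm:ldp_main} on each, and shrinking the mesh — whose only delicate ingredient is the compactness of the relevant family of curves, furnished by the good-rate-function property in Theorem~\ref{thm:ldp_main}.
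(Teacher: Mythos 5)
Your proposal is correct and takes essentially the same route as the paper: the paper presents Corollary \ref{cor:main_cor} as an immediate consequence of Theorem \ref{thm:ldp_main}, obtained exactly as you do by undoing the normalization $\mathbb{P}_n(U_\varepsilon(\Gamma)) = Q_A(U_\varepsilon(\Gamma))/V_A$ so that $C_A$ cancels. The identification $\lim_{n\to\infty}\log V_A/\sqrt{n} = C_A$ that you flag as the only nontrivial ingredient is precisely what the paper asserts at the end of its proof of the theorem, where the unnormalized bounds on $\log Q_A/\sqrt{n}$ (inequality (\ref{eq:up_b_res_m}) and its liminf counterpart) are in fact established first, so no additional Laplace/Varadhan argument is needed.
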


\begin{rem}
Similar results can be obtained for convex polyominoes with fixed perimeter and with both fixed area and perimeter. The only difference will be in the value of the constant $C_A$. For  of type $X$, this constant can be calculated as
\begin{equation}
\label{eq:curv_max}
C_X = \max_{\Gamma \in X} \int_{\Gamma}(1+|\tan \Gamma|)H\(\frac{1}{1+|\cot \Gamma|}\)|dx|. 
\end{equation}
It is easy to see that the curves on which the extremum is reached \cite{petrov2009two} are concatenations of the properly scaled segments of Vershik's limiting shape \cite{vershik1987statistical} given by the equation
\begin{equation}
\label{eq:versh_curve}
e^{\frac{-\pi x}{\sqrt{6}}}+e^{\frac{-\pi y}{\sqrt{6}}}=1.
\end{equation}
In order to find the segments of this curve that maximize (\ref{eq:curv_max}) for the family $X$ under consideration, we need to find such parts of Vershik's curve (\ref{eq:versh_curve}) that satisfy the required relations between the perimeter (coinciding with the perimeter of the circumscribed rectangle) and area. As shown in \cite{petrov2009two} for any admissible combination of $L$ and $A$ we can always find the necessary segments on the curve (\ref{eq:versh_curve}).
\end{rem}

\begin{figure}[!t]
\centering
\includegraphics[width=3.2in]{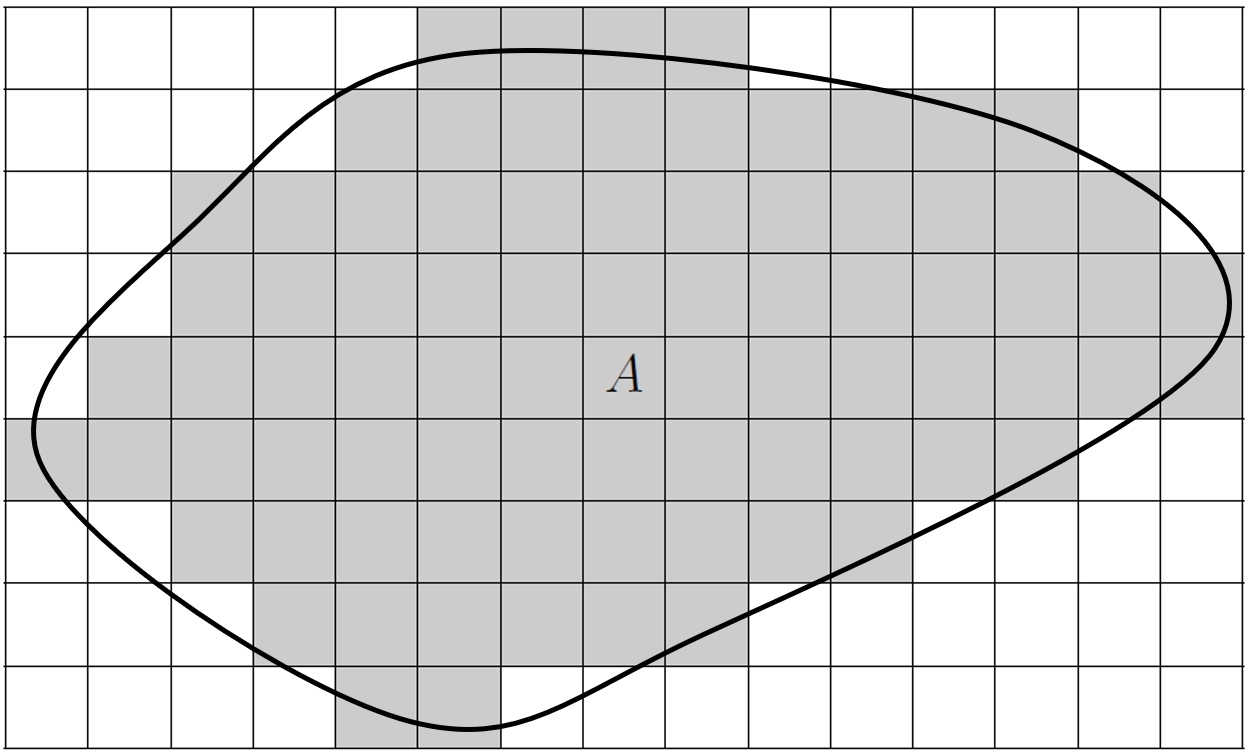}
\vspace{-0.2cm}
\caption{\footnotesize The curve and the approximating convex polyomino.}
\label{fig:dens_13}
\end{figure}

\begin{figure}[!t]
\centering
\includegraphics[width=3.2in]{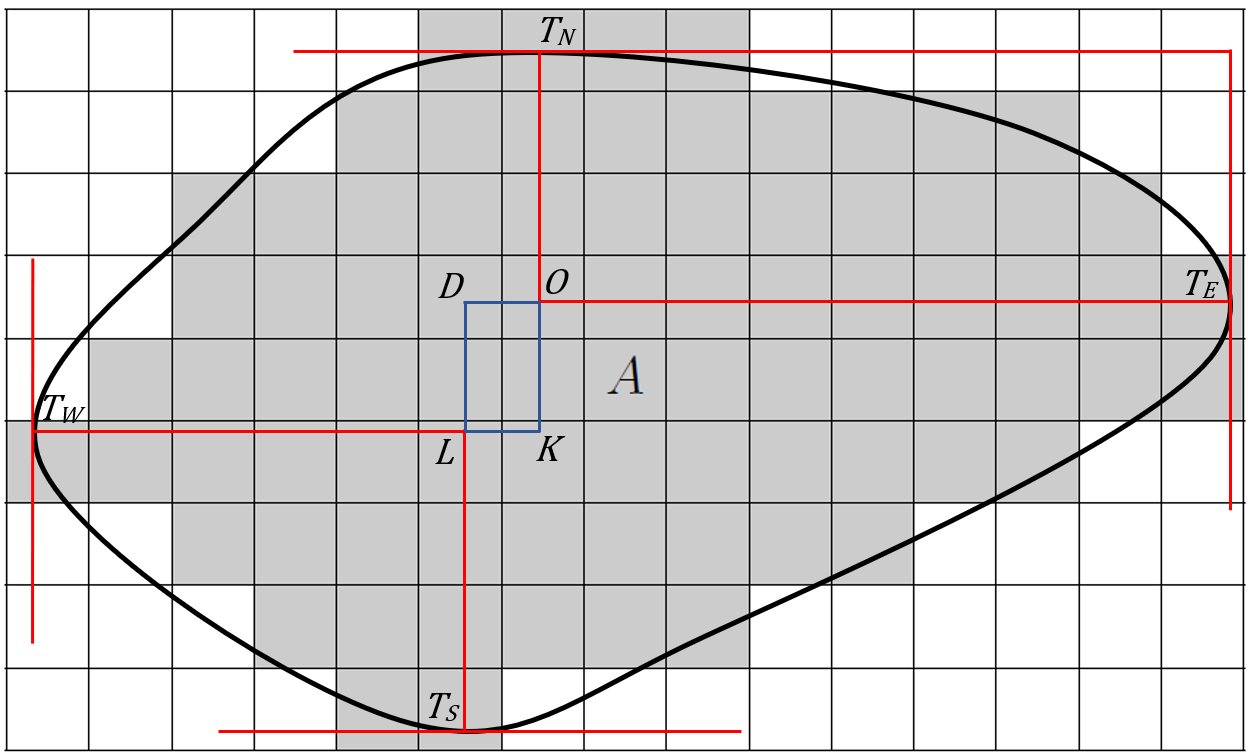}
\vspace{-0.2cm}
\caption{\footnotesize Partitioning of the curve.}
\label{fig:part_gamma}
\end{figure}


\section{Applications}
\label{seq:appl}
A remarkable application of the results stated in Theorem \ref{thm:ldp_main} and Corollary \ref{cor:main_cor} is provided by a recent work \cite{soloveychik2018region} which motivated the present study. That paper focuses on the problem of graphical model selection in sample starving regime when the number of measurements is not sufficient for the consistent detection of all the edges of the graph. Under such restrictions, the authors of \cite{soloveychik2018region} reconsider the goal of the learning process and instead of discovering all the edges of the graph formulate the problem as detection of graph regions with similar coupling parameters and regular boundaries. The regions are modeled by convex polyominoes on square lattices. In order to develop lower information-theoretic bounds derived through the application of Fano's inequality, the authors of \cite{soloveychik2018region} estimate the logarithm of the number of admissible models. The result of the present article allowed them to obtain such bounds and demonstrate that the region detection is feasible with a bounded number of samples unlike the classical graphical model selection requiring this number to grow at least as the logarithm of the number of graph vertices.

\section{Proofs}
\label{sec:proofs}
This section is devoted to the proof of the main result and contains a number of auxiliary lemmas.
\begin{proof}[Proof of Theorem \ref{thm:ldp_main}]
According to the definition of the LDP, the proof will be complete if we demonstrate that
\begin{equation}
\label{eq:proof_1m}
\limsup_{\varepsilon \to 0}\limsup_{n \to \infty} \frac{\log\, \mathbb{P}_n(U_\varepsilon(\Gamma))}{\sqrt{n}} \leqslant -I(\Gamma),
\end{equation}
and
\begin{equation}
\label{eq:proof_2m}
\liminf_{\varepsilon \to 0}\liminf_{n \to \infty} \frac{\log\, \mathbb{P}_n(U_\varepsilon(\Gamma))}{\sqrt{n}} \geqslant -I(\Gamma).
\end{equation}

Let us start with (\ref{eq:proof_1m}). By the very definition of $\Gamma$, it can be partitioned into four segments each of which is a graph of a strictly monotonic function. In our example in Figure \ref{fig:part_gamma}, the four segments are the curve arcs $T_NT_E,\; T_ET_S,\; T_ST_W$, and $T_WT_N$ connecting the points of intersection of $\Gamma$ with its two horizontal and two vertical tangent lines. 

Given a polyomino $Z \in \mathbb{Q}_A$, consider its top row of cells and choose the center of one of these cells. We call the obtained point the north extreme point of the polyomino and denote it by $N$ (see Figure \ref{fig:zoom_extr}). Analogously, we define the other extreme points $E,\; S$, and $W$. 

Now let us consider all the polyominoes from $\mathbb{Q}_A$ whose $N$ and $S$ extreme points have the same $x$ coordinate and whose $E$ and $W$ extreme points have the same $y$ coordinate. Denote this set by $\mathbb{Q}_A^e$ and let us bound its cardinality from above. Indeed, 
\begin{equation}
\label{eq:z_e_calc}
Q_A^e \leqslant Q_A^{T_NN}Q_A^{NT_E}Q_A^{T_EE}Q_A^{ES}Q_A^{ST_S}Q_A^{T_ST_W}Q_A^{T_WW}Q_A^{WT_N},
\end{equation}
where $Q_A^{T_NN}$ is the number of decreasing diagrams in the $\varepsilon$-vicinity of $T_NR$, where $R$ is the point of intersection of the vertical line through $N$ with $\Gamma$ that fit in between the vertical lines through $T_N$ and $N$, $Q_A^{NT_E}$ is the number of decreasing diagrams in the $\varepsilon$-vicinity of $RT_E$ belonging to the quadrant to the north-east from the vertical line through $N$ and the horizontal line through $T_E$, and so on in an analogous manner. For convenience, take the logarithm of both sides of (\ref{eq:z_e_calc}) to obtain
\begin{align}
\label{eq:z_e_calc_log}
\log\,Q_A^e & \leqslant \log\,Q_A^{NT_E} + \log\,Q_A^{T_ST_W} + \log\,Q_A^{ES} + \log\,Q_A^{WT_N} \nonumber \\
& + \log\, Q_A^{T_NN} + \log\,Q_A^{T_EE} + \log\,Q_A^{ST_S} + \log\,Q_A^{T_WW}.
\end{align}
Our goal will be to show that the main contribution to (\ref{eq:z_e_calc_log}) is made by the diagrams inside the large quadrants (the first row in the righ-hand side of (\ref{eq:z_e_calc_log})) whereas those parts of the boundary that correspond to the segments of the form $Q_A^{XT_X}$ or $Q_A^{T_XX}$ (the second row) tend to zero as $\varepsilon$ approaches zero. 

\begin{figure}[!t]
\centering
\includegraphics[width=3.38in]{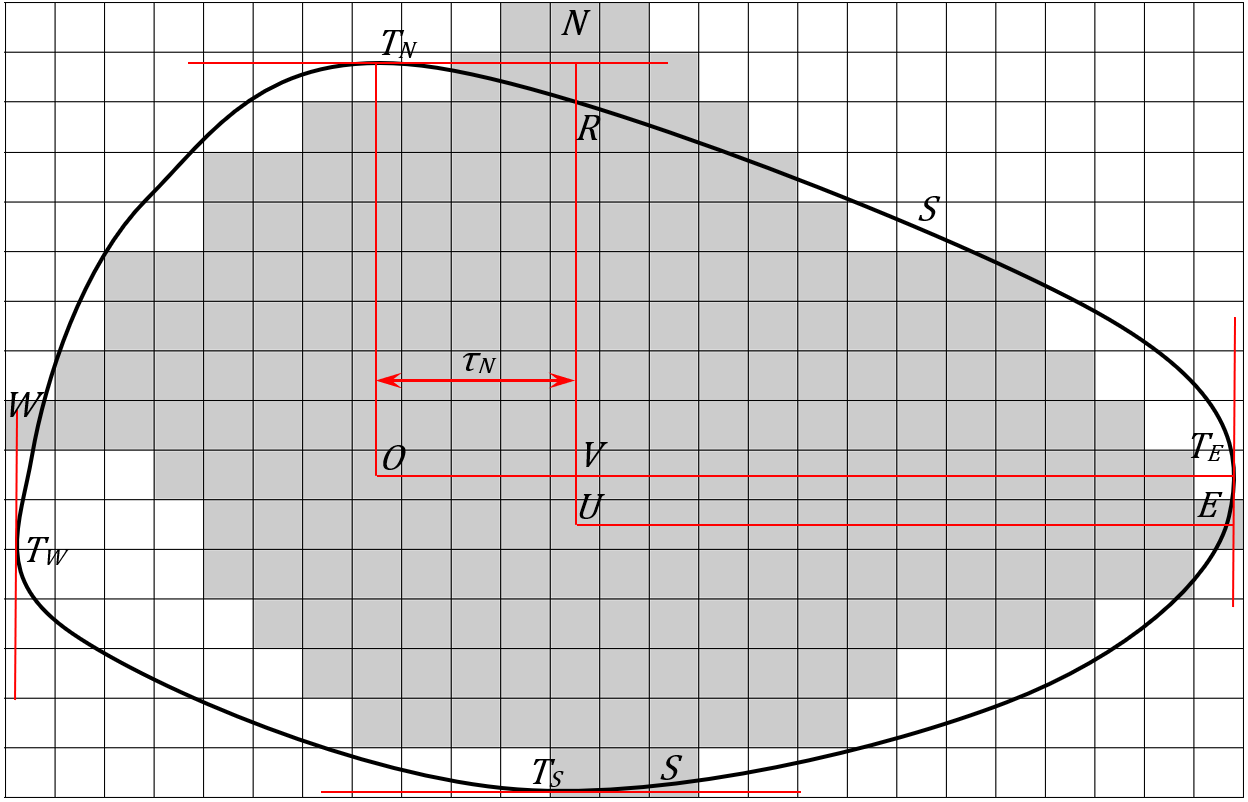}
\vspace{-0.2cm}
\caption{\footnotesize Deviation of the polyomino from the middle curve.}
\label{fig:zoom_extr}
\end{figure}

Let us start from bounding the value of $\log\,Q_A^{T_NN}$. Indeed, the horizontal distance between the points $T_N$ and $N$ must shrink with $\varepsilon$ because the curve is strictly monotonic,
\begin{equation}
\tau_N \to 0,\quad \varepsilon \to 0.
\end{equation}
Analogous relations hold for the other $\tau_X$ as well,
\begin{equation}
\label{eq:tau_bound}
\tau_X \to 0,\;\; \varepsilon \to 0, \quad X = N,E,S,W.
\end{equation}

Below we use the following basic result.
\begin{lemma}(Diagrams With Fixed Endpoints)
\label{lem:count_diagr}
The number of monotonic diagrams connecting points $(a_1,b_1)$ and $(a_2,b_2)$ of the integer square lattice with $a_1\neq a_2$ and $b_1\neq b_2$ which are also right-continuous at $(a_1,b_1)$ is given by
\begin{equation}
\label{eq:binom_coef_enum}
N((a_1,b_1),(a_2,b_2)) = {|a_1-a_2|+|b_1-b_2|-1 \choose |b_1-b_2|}.
\end{equation}
\end{lemma}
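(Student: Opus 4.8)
The plan is to set up an explicit bijection between monotonic diagrams and two-letter words, and then read off the count as a single binomial coefficient. First I would reduce to a canonical orientation: reflecting the lattice across a horizontal or vertical axis preserves both monotonicity and the number of admissible diagrams while only flipping the signs of $a_2-a_1$ and $b_2-b_1$, and the claimed formula depends on these quantities only through the absolute values $p := |a_1-a_2|$ and $q := |b_1-b_2|$. Hence it suffices to treat one case, say $a_1 < a_2$ and $b_1 > b_2$, i.e.\ a diagram that descends as it moves to the right. The hypotheses $a_1 \neq a_2$ and $b_1 \neq b_2$ guarantee $p \geqslant 1$ and $q \geqslant 1$.

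Next I would encode each diagram as a lattice path. A monotonic diagram joining $(a_1,b_1)$ to $(a_2,b_2)$ is a sequence of unit steps, each either horizontal (to the right) or vertical (downward); reaching $(a_2,b_2)$ forces exactly $p$ horizontal and $q$ vertical steps, so the path is recorded by a word of length $p+q$ over the alphabet $\{H,V\}$ containing $p$ letters $H$ and $q$ letters $V$. The map from diagrams to words becomes a genuine bijection precisely because of the right-continuity hypothesis: reading the diagram as the graph of a right-continuous step function $f$ with $f(a_1)=b_1$, right-continuity at $a_1$ forbids a jump at the left endpoint, so $f$ must remain at height $b_1$ over a nonempty interval before its first descent. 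Equivalently, the encoding word must begin with the letter $H$.

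With the object pinned down, the enumeration is immediate. After committing the first step to be $H$, one is free to arrange the remaining $p-1$ copies of $H$ and $q$ copies of $V$ in any order, and the number of such arrangements is
\begin{equation}
\binom{(p-1)+q}{q} = \binom{p+q-1}{q}.
\end{equation}
Substituting $p=|a_1-a_2|$ and $q=|b_1-b_2|$ yields the stated formula~(\ref{eq:binom_coef_enum}).

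The only genuinely delicate point is the middle step: making precise what a monotonic diagram is as a geometric object, and verifying that the right-continuity convention renders the map to $\{H,V\}$-words one-to-one rather than many-to-one. Once one checks that right-continuity at $(a_1,b_1)$ selects exactly the words beginning with $H$ — so that the two competing continuity conventions at the starting corner are not both counted, which is exactly what produces the $-1$ inside the binomial — the remaining combinatorics is a one-line instance of the classical count of monotone lattice paths, and I expect no obstacle there.
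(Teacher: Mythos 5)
Your proposal is correct and follows essentially the same route as the paper: both encode each diagram as a binary word over horizontal/vertical unit steps, with the right-continuity condition at $(a_1,b_1)$ fixing one horizontal step and thereby producing the count $\binom{|a_1-a_2|+|b_1-b_2|-1}{|b_1-b_2|}$ of arrangements of the remaining $|a_1-a_2|-1$ horizontal and $|b_1-b_2|$ vertical steps. Your version merely spells out what the paper leaves implicit (the reflection reduction and the fact that right-continuity forces the word to begin with a horizontal step), so there is no substantive difference.
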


\begin{proof}
Let us associate $1$ with every horizontal edge and $0$ with every vertical edge. Then the desired number of diagrams coincides with the amount of ways $|a_1-a_2|-1$ ones and $|b_1-b_2|$ zeros can be written into a binary codeword of length $|a_1-a_2|+|b_1-b_2|-1$, which is given by the binomial coefficient in (\ref{eq:binom_coef_enum}).
\end{proof}

\begin{lemma}[Binomial Coefficient Bound]
Let $a,\; b\in \N$ and $a > b$, then
\begin{equation}
\label{eq:stril_b}
aH\(\frac{b}{a}\) - \log \(\sqrt{8\pi b(1-b/a)}\) \leqslant \log{{a \choose b}} \leqslant aH\(\frac{b}{a}\).
\end{equation}
\end{lemma}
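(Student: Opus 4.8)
The plan is to prove both inequalities in (\ref{eq:stril_b}) using Stirling's formula with explicit error bounds. Recall the standard two-sided Stirling estimate $\sqrt{2\pi}\, m^{m+1/2}e^{-m} \leqslant m! \leqslant e\, m^{m+1/2}e^{-m}$, or more conveniently the bounds $m! = \sqrt{2\pi m}\,(m/e)^m e^{\lambda_m}$ with $\tfrac{1}{12m+1} < \lambda_m < \tfrac{1}{12m}$. First I would write $\binom{a}{b} = \tfrac{a!}{b!\,(a-b)!}$ and take logarithms, so that $\log\binom{a}{b}$ becomes a combination of the three Stirling expansions. Setting $u = b/a$, the dominant term $a\log a - b\log b - (a-b)\log(a-b)$ collapses, after factoring, to exactly $a\,H(u)$ (working in base $2$ as the paper does, so the entropy appears with $\log_2$); this is the content of the classical entropy approximation to the binomial coefficient.

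For the upper bound $\log\binom{a}{b} \leqslant aH(b/a)$, I would avoid Stirling altogether and instead use a cleaner combinatorial argument: since $0 < u < 1$, the binomial theorem gives $1 = (u + (1-u))^a \geqslant \binom{a}{b}u^b(1-u)^{a-b}$, and taking logarithms yields $0 \geqslant \log\binom{a}{b} + b\log u + (a-b)\log(1-u) = \log\binom{a}{b} - aH(u)$, which is precisely the desired inequality. This elementary route is both shorter and sharper than tracking Stirling error terms, and it requires only $a > b \geqslant 1$.

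For the lower bound, the Stirling route is unavoidable because we must control the subdominant $\sqrt{\cdot}$ factor. Collecting the square-root prefactors from the three expansions produces $\sqrt{2\pi a}\,\big/\big(\sqrt{2\pi b}\,\sqrt{2\pi(a-b)}\big) = 1\big/\sqrt{2\pi\, b(1-b/a)}$, and the residual exponential $e^{\lambda_a - \lambda_b - \lambda_{a-b}}$ must be bounded below. The plan is to show this residual is at least some absolute constant and to absorb that constant together with the factor $\sqrt{2\pi}$ into the stated $\sqrt{8\pi\,b(1-b/a)}$; the numerical constant $8\pi = 4\cdot 2\pi$ is chosen precisely so that the worst-case Stirling correction is comfortably covered. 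Taking $\log$ of $\binom{a}{b} \geqslant \big(1/\sqrt{8\pi\,b(1-b/a)}\big)\,2^{aH(b/a)}$ then gives the left inequality of (\ref{eq:stril_b}).

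The main obstacle I anticipate is bookkeeping in the lower bound: one must verify that the loss from replacing the exact Stirling remainder $e^{\lambda_a-\lambda_b-\lambda_{a-b}}$ by a constant, combined with the gap between $\sqrt{2\pi}$ and $\sqrt{8\pi}$, is genuinely nonnegative for all admissible $a > b \geqslant 1$, including the degenerate-looking small cases where $b$ or $a-b$ equals $1$. Since $\lambda_b$ and $\lambda_{a-b}$ are largest exactly when their arguments are smallest, the worst case occurs at the extremes $b=1$ or $b=a-1$, so I would check that the chosen constant $8\pi$ dominates there and note monotonicity handles the interior. Everything else is the routine collapse of the leading Stirling terms into the entropy, which I would state rather than belabor.
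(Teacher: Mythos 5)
Your proposal is correct, but note that the paper itself states this lemma without proof (it is invoked as a standard Stirling-type fact), so there is no internal argument to compare against; your write-up would in fact supply the missing justification. Both halves of your plan check out. The upper bound via $1=(u+(1-u))^a\geqslant\binom{a}{b}u^b(1-u)^{a-b}$ with $u=b/a$ is the cleanest possible route and is exactly tight in form, since $-b\log u-(a-b)\log(1-u)=aH(u)$. For the lower bound, the bookkeeping you flag as the main obstacle does close: writing $m!=\sqrt{2\pi m}\,(m/e)^m e^{\lambda_m}$ with $\tfrac{1}{12m+1}<\lambda_m<\tfrac{1}{12m}$, the leading terms collapse to $2^{aH(b/a)}$ (the $\log e$ terms cancel, so working in base $2$ costs nothing), the prefactors combine to $1/\sqrt{2\pi\,b(1-b/a)}$, and the residual satisfies
\begin{equation}
\lambda_a-\lambda_b-\lambda_{a-b} > -\lambda_b-\lambda_{a-b} > -\tfrac{1}{12}-\tfrac{1}{12} = -\tfrac16,
\end{equation}
with the worst case $b=1$, $a-b=1$ exactly as you predict. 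Since $e^{-1/6}\approx 0.846 > \tfrac12 = \sqrt{2\pi}/\sqrt{8\pi}$, the factor-of-two slack built into $\sqrt{8\pi}$ absorbs the correction uniformly in $a>b\geqslant 1$, and the claimed inequality follows by taking $\log_2$; note the multiplicative correction is base-independent, so no further conversion is needed. Two minor points worth making explicit if you write this out: the lower bound genuinely requires $b\geqslant 1$ (at $b=0$ the term $\log\sqrt{8\pi b(1-b/a)}$ degenerates), consistent with the paper's convention that $\N$ excludes $0$; and your remark that Stirling is unavoidable is essentially right, since the elementary type-counting bound $\binom{a}{b}\geqslant 2^{aH(b/a)}/(a+1)$ is strictly weaker than the stated $1/\sqrt{8\pi b(1-b/a)}$ prefactor in the central regime $b\asymp a/2$.
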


Consider the segment $RT_E$ of the curve $\Gamma$ and represent it as a monotonic function
\begin{equation}
y = f(x)
\end{equation}
supported over the interval $[V,T_E]$, see Figure \ref{fig:zoom_extr} for reference. Let the part of the boundary of the polyomino $Z$ supported by the same interval be $\kappa_n(x)$. Below we show that condition
\begin{equation}
d\(\Gamma,\partial Z\) \leqslant \varepsilon,
\end{equation}
where $\partial Z$ is the polyomino boundary curve, implies that at the points $O$ and $V$,
\begin{equation}
|y(x) - \kappa_n(x)| \leqslant \gamma(\varepsilon),\quad x \in\{O,V\},
\end{equation}
for some function $\gamma(\varepsilon) \to 0,\; \varepsilon \to 0$. Using this fact  and the last two lemmas, we can write
\begin{equation}
\log\,Q_A^{T_NN} \leqslant \log\,{(2\gamma(\varepsilon) + \tau_N)\sqrt{n} \choose 2\gamma(\varepsilon)\sqrt{n}} \leqslant \sqrt{n}(2\gamma(\varepsilon)+\tau_N)H\(\frac{\tau_N}{2\gamma(\varepsilon)+\tau_N}\),
\end{equation}
for all $n$ large enough. Here and below to keep the notation short we omit the rounding square brackets in $2\gamma(\varepsilon)\sqrt{n}$ and all similar expressions and assume the corresponding numbers to be integers. Similarly,
\begin{equation}
\label{eq:Qxx1_bound}
\log\,Q_A^{T_XX} \leqslant \sqrt{n}(2\gamma(\varepsilon)+\tau_X)H\(\frac{\tau_X}{2\gamma(\varepsilon)+\tau_X}\), X = N,E,S,W,
\end{equation}
where the index $T_XX$ is used to denote both $T_XX$ and $XT_X$ interchangeably without loss of rigor. Since $H(x)$ is bounded and $2\gamma(\varepsilon)+\tau_X \to 0,\; \varepsilon\to 0$, inequality (\ref{eq:Qxx1_bound}) immediately implies that
\begin{equation}
\label{eq:q_XX}
\lim_{\varepsilon \to 0}\frac{\log\,Q_A^{T_XX}}{\sqrt{n}} = 0,\quad X = N,E,S,W.
\end{equation}


Next we focus on bounding the value of $\log\,Q_A^{NT_E}$. The rest of the terms in the first line of (\ref{eq:z_e_calc_log}) are treated analogously.

\begin{lemma}
\label{lem:up_b_q_n}
Under the assumptions of Theorem \ref{thm:ldp_main},
\begin{equation}
\label{eq:lem_5_main}
\limsup_{n\to \infty}\frac{\log\, Q_A^{NT_E}}{\sqrt{n}} \leqslant\int_{NT_E}H\(\frac{|y'|}{1+|y'|}\)(|dx|+|dy|) + \phi^{NT_E}(\varepsilon),
\end{equation}
where $NT_E$ is the segment of $\Gamma$ in the north-eastern quadrant and $\phi^{NT_E}(\varepsilon) \to 0,\; \varepsilon \to 0$.
\end{lemma}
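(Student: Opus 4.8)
The plan is to discretize the arc $NT_E$ into short monotone pieces, count the lattice paths on each piece with the binomial formula of Lemma~\ref{lem:count_diagr}, convert each binomial into an entropy term via the right inequality in~(\ref{eq:stril_b}), and recognize the resulting expression as a Riemann sum of the integral on the right-hand side of~(\ref{eq:lem_5_main}). The combinatorial slack coming from the $\varepsilon$-vicinity will only contribute a subexponential number of terms, and will therefore be absorbed into the vanishing remainder $\phi^{NT_E}(\varepsilon)$.

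First I would write the arc as a graph $y=f(x)$ of a strictly monotone function on an interval $[x_N,x_{T_E}]$ and fix a partition $x_N=x_0<x_1<\dots<x_k=x_{T_E}$ of mesh at most $\delta$, setting $y_i=f(x_i)$. The pointwise closeness established just before the lemma, $|y(x)-\kappa_n(x)|\le \gamma(\varepsilon)$ at the partition abscissae, forces the height of any admissible diagram $Z$ at $x_i$ to lie in a window $W_i$ of about $2\gamma(\varepsilon)\sqrt{n}$ lattice points around $y_i\sqrt{n}$. Fixing the right-continuous height $h_i\in W_i$ at each interior partition point decomposes every admissible path uniquely into independent pieces, so summing over the $h_i$ and multiplying the counts on consecutive pieces gives
\begin{equation}
Q_A^{NT_E}\;\le\;\sum_{h_1\in W_1}\!\cdots\!\sum_{h_{k-1}\in W_{k-1}}\ \prod_{i=1}^{k} N\bigl((x_{i-1},h_{i-1}),(x_i,h_i)\bigr),
\end{equation}
with $h_0,h_k$ the fixed endpoints. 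Bounding each factor by Lemma~\ref{lem:count_diagr} and then by~(\ref{eq:stril_b}) yields $\log N\le \sqrt{n}\,(\Delta x_i+\Delta h_i)\,H\!\left(\frac{\Delta h_i}{\Delta x_i+\Delta h_i}\right)$, where $\Delta x_i=x_i-x_{i-1}$ and $\Delta h_i=|h_i-h_{i-1}|/\sqrt{n}$.

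Next I would dispose of the slack. The number of summands is at most $(2\gamma(\varepsilon)\sqrt{n})^{\,k-1}$, whose logarithm is $O_k(\log\, n)=o(\sqrt{n})$, so after dividing by $\sqrt{n}$ and letting $n\to\infty$ the sum inherits the exponential rate of its largest term. On that term every $h_i$ sits within $\gamma(\varepsilon)$ of $y_i\sqrt{n}$, hence $\Delta h_i=\Delta y_i+O(\gamma(\varepsilon))$ with $\Delta y_i=|y_i-y_{i-1}|$; using the boundedness and piecewise Lipschitz behaviour of $u\mapsto(1+u)H\!\left(\frac{u}{1+u}\right)$ I would replace the chord slope $\Delta h_i/\Delta x_i$ by $|f'|$ up to errors governed by $\delta$ and $\gamma(\varepsilon)$, and recognize
\begin{equation}
\sum_{i=1}^k(\Delta x_i+\Delta y_i)\,H\!\left(\frac{\Delta y_i}{\Delta x_i+\Delta y_i}\right)\ \xrightarrow[\ \delta\to0\ ]{}\ \int_{NT_E}(1+|y'|)H\!\left(\frac{|y'|}{1+|y'|}\right)|dx|,
\end{equation}
which is the claimed integral once $|dx|+|dy|=(1+|y'|)|dx|$ is used. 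Taking $\limsup_{n\to\infty}$ first, then $\delta\to0$, packages the discretization error and the $O(\gamma(\varepsilon))$ window contribution into a single remainder $\phi^{NT_E}(\varepsilon)\to0$.

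The hard part will be the uniform control in this last step near the vertical tangent at $T_E$ (and the near-horizontal tangent at $N$), where $|f'|\to\infty$ (resp. $0$) and $u\mapsto H\!\left(\frac{u}{1+u}\right)$ is only H\"older-$\tfrac12$, not Lipschitz, in $u$ at the endpoints $0,1$, so that a naive slope perturbation need not vanish. I would resolve this by exploiting the symmetry $H(u)=H(1-u)$ together with $\binom{a}{b}=\binom{a}{a-b}$: on pieces where the arc is steep I discretize along the $y$-axis rather than the $x$-axis, equivalently partitioning $NT_E$ by arc length so that on every piece the chord stays bounded away from both purely horizontal and purely vertical. This keeps each binomial argument in a regime where the entropy perturbation is genuinely Lipschitz, ensuring the mesh and $O(\gamma(\varepsilon))$ errors vanish uniformly and can be absorbed into $\phi^{NT_E}(\varepsilon)$.
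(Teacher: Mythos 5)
Your skeleton---discretize the arc, count restrictions with Lemma \ref{lem:count_diagr}, pass to entropy via (\ref{eq:stril_b}), and recognize a Riemann sum---is exactly the paper's, and your dismissal of the polynomially many window choices matches the paper's $s\,\log((2\gamma(\varepsilon))^2 n)$ term. The genuine gap is in how you control the window slack, and it is fatal under your stated order of limits. At a partition point the diagram height may differ from $y$ by up to $\gamma(\varepsilon)$, so on an interval of length $\delta$ the chord slope of the diagram differs from that of $\Gamma$ by up to $2\gamma(\varepsilon)/\delta$. You propose to absorb this by a per-interval Lipschitz (or H\"older) perturbation bound and then let $\delta\to 0$ with $\varepsilon$ fixed; but in that regime $\gamma(\varepsilon)/\delta\to\infty$, so the slope perturbation is not small and no local, interval-by-interval estimate can close the argument. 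Quantitatively, with $L$ as in (\ref{eq:L_def}), treating each interval at worst case costs up to $\delta\,L\big(2\gamma(\varepsilon)/\delta\big)\approx\delta\,\log\big(\gamma(\varepsilon)/\delta\big)$ per interval, and summing over the roughly $(\beta-\alpha)/\delta$ intervals gives an error of order $(\beta-\alpha)\log\big(\gamma(\varepsilon)/\delta\big)$, which diverges as $\delta\to 0$. Hence the remainder you want to call $\phi^{NT_E}(\varepsilon)$ actually depends on $\delta$ and does not tend to zero: the proof does not close.

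The paper's proof handles precisely this point with two ingredients your proposal lacks. First, the per-interval slacks telescope: the differences between the diagram's and the curve's height increments sum, over all intervals, to a quantity controlled by the endpoint deviations alone, hence $O(\gamma(\varepsilon))$ uniformly in the partition. Second, $L$ satisfies the subadditivity property (\ref{eq:L_cont_b}) and is concave, so Jensen's inequality (\ref{eq:jens_ineq}) aggregates all slack costs into the single term $(\beta-\alpha)\,L\big(f(\varepsilon)/(\beta-\alpha)\big)$ of (\ref{eq:sec_term_b}), which depends on $\varepsilon$ only. This partition-uniform error is what allows the paper to fix $\varepsilon$, refine the partition ($w\to 0$, then Fatou plus continuity of $L$) to reach the integral, and only then obtain a legitimate $\phi^{NT_E}(\varepsilon)$. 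Note also that your fallback for the non-Lipschitz regions does not work as stated: near $N$ and $T_E$ the curve itself is nearly horizontal, respectively vertical, so no re-partition by arc length keeps the chords ``bounded away from both purely horizontal and purely vertical,'' and swapping the roles of $x$ and $y$ (the paper's footnote to (\ref{eq:der_cond})) merely exchanges the two degenerate regimes; with the subadditivity-plus-Jensen aggregation this Lipschitz bookkeeping is unnecessary. (Your argument could alternatively be repaired by coupling the mesh to $\varepsilon$, e.g.\ $\delta(\varepsilon)=\sqrt{\gamma(\varepsilon)}$, so that both the modulus-of-continuity error and $\gamma(\varepsilon)/\delta(\varepsilon)$ vanish, but that is a different proof from the one you wrote and still requires H\"older rather than Lipschitz control at the flat and steep ends.)
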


\begin{rem}
\label{rem:bound_rem_up}
Using exactly the same reasoning as in Lemma \ref{lem:up_b_q_n}, we can obtain similar bounds for the rest of the quadrant segments $ES, T_ST_W$ and $WT_N$ of $\Gamma$.
\end{rem}

Let us get back to the upper bound on $Q_A$. Note that $Q_A$ is a sum of $Q_A^e$-s for all possible choices of the extreme points. We know that the point $N$ can move around $T_N$ such that its abscissa belongs to the range $N_x \in [T_{N,x}-\tau_N,T_{N,x}+\tau_N]$. Similarly for the rest of the extreme points. Overall,
\begin{equation}
\label{eq:calc}
Q_A = \sum_{N \in \Delta_N, E \in \Delta_E,S \in \Delta_S,W \in \Delta_W}Q_A^e = \sum Q_A^{T_NN}Q_A^{NT_E}Q_A^{T_EE}Q_A^{ES}Q_A^{ST_S}Q_A^{T_ST_W}Q_A^{T_WW}Q_A^{WT_N},
\end{equation}
where $\Delta_N = [T_{N,x}-\tau_N,T_{N,x}+\tau_N],\; \Delta_E = [T_{E,y}-\tau_E,T_{E,y}+\tau_E],\; \Delta_S = [T_{S,x}-\tau_S,T_{S,x}+\tau_S]$, and $\Delta_W = [T_{W,y}-\tau_W,T_{W,y}+\tau_W]$. Note that the specific sequence of nodes in the superscripts of the right-hand side of (\ref{eq:calc}) is chosen according to Figure \ref{fig:zoom_extr}, and can alter for a different set $Q_A^e$, but we will always have four multipliers corresponding to the curve segments of the form $T_XX$ and four corresponding to the segments in quadrants, so it is only a matter of notation.

Using (\ref{eq:calc}), let us bound the logarithm of $Q_A$ from above,
\begin{align}
\label{eq:main_asymp_der1}
&\frac{1}{\sqrt{n}}\log\ Q_A = \frac{1}{\sqrt{n}}\log\, \(\sum_{N \in \Delta_N, E \in \Delta_E,S \in \Delta_S,W \in \Delta_W}Q_A^{T_NN}Q_A^{NT_E}Q_A^{T_EE}Q_A^{ES}Q_A^{ST_S}Q_A^{T_ST_W}Q_A^{T_WW}Q_A^{WT_N}\) \nonumber \\
& \leqslant \frac{1}{\sqrt{n}}\log\,\(16n^2\prod_X(2\tau_X)\max\limits_{N,E,S,W}Q_A^{T_NN}Q_A^{NT_E}Q_A^{T_EE}Q_A^{ES}Q_A^{ST_S}Q_A^{T_ST_W}Q_A^{T_WW}Q_A^{WT_N}\) \nonumber \\
& \leqslant \frac{1}{\sqrt{n}}\log\, \left(\max_N Q_A^{T_NN}\max_{N} Q_A^{NT_E}\max_E Q_A^{T_EE}\max_{E,S} Q_A^{ES}\max_S Q_A^{ST_S}Q_A^{T_ST_W}\max_{W} Q_A^{T_WW}\max_W Q_A^{WT_N}\right) \nonumber \\
&\qquad\qquad + \frac{\log\(16n^2\prod_X(2\tau_X)\)}{\sqrt{n}} \nonumber \\
& \stackrel{(i)}{\leqslant} \(\int\limits_{T_N}^{T_E}+\int\limits_{T_E}^{T_S}+\int\limits_{T_S}^{T_W}+\int\limits_{T_W}^{T_N}\)H\(\frac{|y'|}{1+|y'|}\)(|dx|+|dy|) + \phi(\varepsilon) + \frac{\log\(16n^2\prod_X(2\tau_X)\)}{\sqrt{n}} \nonumber \\
& = \int_\Gamma H\(\frac{|y'|}{1+|y'|}\)(|dx|+|dy|) + \phi(\varepsilon) + \frac{\log\(16n^2\prod_X(2\tau_X)\)}{\sqrt{n}},
\end{align}
where in (i) we used (\ref{eq:q_XX}), Lemma \ref{lem:up_b_q_n} and Remark \ref{rem:bound_rem_up}, and therefore $\phi(\varepsilon)\to 0$ when $\varepsilon \to 0$. Take the $\limsup_{\varepsilon\to\infty}\limsup_{n\to\infty}$ of (\ref{eq:main_asymp_der1}) to get the required bound,
\begin{equation}
\label{eq:up_b_res_m}
\limsup_{\varepsilon\to\infty}\limsup_{n\to\infty}\frac{\log\,Q_A}{\sqrt{n}} \leqslant \int_\Gamma H\(\frac{|y'|}{1+|y'|}\)(|dx|+|dy|).
\end{equation}

Let us now turn to the proof of the lower bound (\ref{eq:proof_2m}). Similarly to (\ref{eq:z_e_calc_log}), it is easy to note that
\begin{equation}
\label{eq:z_e_calc_log_2}
\log\,Q_A^e \geqslant \log\,Q_A^{NT_E} + \log\,Q_A^{T_ST_W} + \log\,Q_A^{ES} + \log\,Q_A^{WT_N}.
\end{equation}
To treat this bound we use the following result.
\begin{lemma}
\label{lem:lower_b_q_n}
Under the assumptions of Theorem \ref{thm:ldp_main},
\begin{equation}
\label{lem:lemma_6_main}
\liminf_{n\to \infty}\frac{\log\, Q_A^{NT_E}}{\sqrt{n}} \geqslant\int_{NT_E}H\(\frac{|y'|}{1+|y'|}\)(|dx|+|dy|) + \psi^{NT_E}(\varepsilon),
\end{equation}
where $NT_E$ is the segment of $\Gamma$ in the north-eastern quadrant and $\psi^{NT_E}(\varepsilon) \to 0,\; \varepsilon \to 0$.
\end{lemma}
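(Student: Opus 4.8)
The plan is to mirror the upper bound of Lemma \ref{lem:up_b_q_n} but to produce an inequality in the opposite direction by counting only a conveniently chosen subfamily of admissible diagrams and then estimating the resulting binomial coefficients \emph{from below} via \eqref{eq:stril_b}. Write the arc $NT_E$ as the graph of a strictly decreasing function $y=f(x)$ and partition it by points $P_0=N,P_1,\dots,P_M=T_E$ chosen fine enough that the bounding rectangle of each sub-arc $P_{i-1}P_i$ has diameter smaller than, say, $\varepsilon/2$. On the $i$-th sub-arc let $\delta_i^x$ and $\delta_i^y$ denote the positive horizontal and vertical extents, so that the local slope satisfies $|f'|\approx\delta_i^y/\delta_i^x$.

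Next I would fix, for each $i$, the nearest $\tfrac{1}{\sqrt n}$-lattice point $\hat P_i$ to $P_i$ and retain only those monotonic diagrams that pass through every $\hat P_i$. Since the $\hat P_i$ are fixed, the sub-paths on distinct sub-arcs are chosen independently, so the cardinality of this subfamily is the product over $i$ of the numbers of monotonic lattice paths joining $\hat P_{i-1}$ to $\hat P_i$, each of which is given by Lemma \ref{lem:count_diagr}. The decisive point is that, because each sub-arc lies inside a box of diameter $<\varepsilon/2$ and every monotone staircase joining the two opposite corners of that box also stays inside it, \emph{every} concatenated diagram of this subfamily automatically lies in $U_\varepsilon(\Gamma)$; hence no tube constraint is lost and the full binomial product is a legitimate lower bound for $Q_A^{NT_E}$. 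Taking logarithms,
\begin{equation}
\log\, Q_A^{NT_E}\geqslant\sum_{i=1}^{M}\log\,{(\delta_i^x+\delta_i^y)\sqrt n-1 \choose \delta_i^y\sqrt n}.
\end{equation}

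Applying the lower estimate in \eqref{eq:stril_b} to each factor (with $a=(\delta_i^x+\delta_i^y)\sqrt n-1$ and $b=\delta_i^y\sqrt n$) and using continuity of $H$ gives
\begin{equation}
\log\, Q_A^{NT_E}\geqslant\sqrt n\sum_{i=1}^{M}(\delta_i^x+\delta_i^y)H\(\frac{\delta_i^y}{\delta_i^x+\delta_i^y}\)-M\cdot O(\log n).
\end{equation}
Dividing by $\sqrt n$ and letting $n\to\infty$ with $M$ fixed kills the $M\,O(\log n)/\sqrt n$ correction, and using $\delta_i^x+\delta_i^y=\delta_i^x(1+|f'|)$ together with $\delta_i^y/(\delta_i^x+\delta_i^y)=|f'|/(1+|f'|)$ identifies the remaining sum as a Riemann sum for $\int_{NT_E}(1+|y'|)H\(\frac{|y'|}{1+|y'|}\)|dx|=\int_{NT_E}H\(\frac{|y'|}{1+|y'|}\)(|dx|+|dy|)$. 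The gap between this Riemann sum and the integral, the $O(1/\sqrt n)$ rounding incurred when $P_i$ is replaced by $\hat P_i$, and the contribution of the two degenerate end pieces near $N$ and $T_E$ (where the tangent becomes horizontal or vertical) are all controlled by the mesh, which has been tied to $\varepsilon$; collecting them into a single term $\psi^{NT_E}(\varepsilon)\to0$ yields \eqref{lem:lemma_6_main}.

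The main obstacle is the coordination of limits that makes such Vershik-type lower bounds delicate: the partition, and hence both $M$ and the mesh, must be fixed as a function of $\varepsilon$ \emph{before} sending $n\to\infty$, and one must check that a mesh fine enough to force every counted staircase into $U_\varepsilon(\Gamma)$ is simultaneously coarse enough that the $M\,O(\log n)/\sqrt n$ error vanishes in the inner limit, which it does since $M$ stays fixed once $\varepsilon$ is fixed. A secondary point is the hypothesis $a_1\neq a_2$, $b_1\neq b_2$ of Lemma \ref{lem:count_diagr}: it holds on every interior sub-arc because $f$ is strictly monotonic, while the degenerate end pieces contribute $o(1)$ since $H$ is bounded and their total length tends to $0$. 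Finally, as in the upper bound, the fixed-area requirement does not enter this per-quadrant estimate; at the $\sqrt n$ scale it only restricts the admissible curves and is absorbed into $C_A$, so the count of tube-respecting diagrams in a single quadrant is governed solely by the entropy integral.
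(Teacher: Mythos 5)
Your counting skeleton (fix lattice points $\hat P_i$ along the arc, multiply the binomial counts of monotone paths between consecutive points, lower-bound each factor via (\ref{eq:stril_b}), and recognize a Riemann sum for the entropy integral) parallels the paper's, and that part is sound. The genuine gap is your final paragraph: you assert that the fixed-area requirement ``does not enter this per-quadrant estimate'' and is ``absorbed into $C_A$.'' That is exactly backwards for the lower bound, and it is the point on which the paper's proof spends most of its effort. The lemma feeds into inequality (\ref{eq:z_e_calc_log_2}), which is only valid if every combination of the four quadrant diagrams assembles into a convex polyomino of area \emph{exactly} $A$ lying in $U_\varepsilon(\Gamma)$; accordingly, the paper stipulates that for the lower bound one must only count those $\kappa_n$ that are boundaries of convex polyominoes of area $A^{NT_E}$ belonging to $U_\varepsilon(y)$. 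Your family --- all staircases through the points $\hat P_i$ --- contains diagrams of many different enclosed areas, and only a vanishing fraction of them enclose the prescribed number of cells; so its cardinality does not bound the constrained count $Q_A^{NT_E}$ from below, and gluing four unconstrained quadrant pieces produces polyominoes whose total area fluctuates around $nA$ rather than equaling it. Nor can this be waved away by pigeonhole: that would show \emph{some} area value is popular among your diagrams, not the specific value $A^{NT_E}$. (Also, $C_A$ is the normalization of the measure $\mathbb{P}_{A,n}$, i.e.\ the exponential rate of $V_A$; it has no mechanism for absorbing a per-quadrant constraint violation.)

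This is precisely why the paper's proof contains the machinery you omitted: it pins $\kappa_n$ to the curve within $1/\sqrt{n}$ at the partition points (condition (\ref{eq:cond_25})), builds the diagram left to right against the area budget $A^{NT_E}$, and then handles the two failure modes --- exhausting the area before reaching $\beta$, or reaching $\beta$ with area to spare --- using the $L^1$ estimates (\ref{eq:ineq_28}) and (\ref{eq:eq_delta_29}): excess cells can be spread above the constructed diagram without leaving $U_\varepsilon(y)$, and the uncovered terminal piece $[\eta,\beta]$ has length $\theta+\Delta \to 0$ as $\Delta \to 0$. The entropy integral survives because these corrections are $o(\sqrt{n})$ and vanish with $\varepsilon$. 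To repair your proposal you would need to add an argument of this kind (or some substitute, e.g.\ an explicit surgery that adjusts the enclosed area of a counted diagram by any prescribed amount while staying in the tube and sacrificing at most an $e^{o(\sqrt{n})}$ factor); as written, the bound you prove concerns the wrong ensemble.
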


\begin{rem}
\label{rem:bound_rem_lower}
Here again, through the same reasoning as in Lemma \ref{lem:lower_b_q_n}, we can obtain similar bounds for the segments $ES, T_ST_W$ and $WT_N$ of $\Gamma$.
\end{rem}

In a manner similar to (\ref{eq:up_b_res_m}), we get the lower bound from Lemma \ref{lem:lower_b_q_n} and Remark \ref{rem:bound_rem_lower},
\begin{equation}
\liminf_{\varepsilon\to\infty}\liminf_{n\to\infty}\frac{\log\,Q_A}{\sqrt{n}} \geqslant \int_\Gamma H\(\frac{|y'|}{1+|y'|}\)(|dx|+|dy|).
\end{equation}
Note that we can express the asymptotic behavior of $V_A$ as
\begin{equation}
\lim_{n\to \infty} \frac{\log\,V_A}{\sqrt{n}} = \max_{\Gamma \in X} I(\Gamma),
\end{equation}
and the statement of Theorem \ref{thm:ldp_main} follows.
\end{proof}


The proofs presented below follow that of Theorem 1 from \cite{blinovskii1999large} with adjustments necessary for our setup. 

\begin{proof}[Proof of Lemma \ref{lem:up_b_q_n}]
As we have already mentioned earlier, the curve segment $RT_E$ can be parametrized as a monotonically decreasing piece-wise differentiable function supported on the horizontal projection $[V,T_E]$ of $RT_E$ onto the $x$ axis,
\begin{equation}
\label{eq:alphabeta_def}
y = f(x),\quad x \in [\alpha,\beta],
\end{equation}
where we assume $f(x)$ to be positive and denote $\alpha=V,\; \beta=T_E$. For convenience, the monotonically decreasing part of the polyomino boundary $y=\kappa_n(x)$ considered here is assumed to be continuous on the right.

Partition $[\alpha, \beta]$ into $s$ closed intervals
\begin{equation}
[\alpha, \beta] = \bigcup\limits_{k=1}^{s}[c_k,q_k],
\end{equation}
intersecting only on their boundaries. Without loss of generality assume that for any $x$ which is an end point of one of the considered intervals,
\begin{equation}
\label{eq:der_cond}
y'(x)<c,
\end{equation}
for some constant\footnote{If this condition does not hold for some segment of $\Gamma$, we can always consider the other local parametrization $x=x(y)$, for which it will hold.} $c$.

Below, in the course of proving (\ref{eq:proof_1m}) we replace the requirement $\kappa_n \in U_\varepsilon(y)$, by
\begin{equation}
\label{eq:gamma_ineq}
|\kappa_n(x)-y(x)|<\gamma(\varepsilon),
\end{equation}
where $x$ runs through the end points of the intervals and $\gamma(\varepsilon)>0$. Later we explain that (\ref{eq:der_cond}) and the condition $\kappa_n \in U_\varepsilon(y)$ imply that $\gamma(\varepsilon)$ can be chosen in such a way that $\gamma(\varepsilon) \to 0,\; \varepsilon \to 0$.

For $z\leqslant 0$, define a function
\begin{equation}
\label{eq:L_def}
L(z) = (1-z)H\(\frac{-z}{1-z}\),
\end{equation}
which is continuous and satisfies
\begin{equation}
\label{eq:L_cont_b}
0 \stackrel{(i)}{\leqslant} L(z+\xi)-L(z) \stackrel{(ii)}{\leqslant} L(\xi) \to 0,\quad \xi \to 0,
\end{equation}
where (i) follows from the monotonicity of $L$ and (ii) from the relation
\begin{equation}
L'_z(z+\xi)-L'_z(z) = \log \frac{-z-\xi}{1-z-\xi}-\log\frac{-z}{1-z} \geqslant 0,\; \xi \leqslant 0.
\end{equation}
Let $n_1,\; n_2,\; \dots$ be a sequence on which $\limsup$ is reached in (\ref{eq:lem_5_main}). For any $x$ which is an end on an interval $[c_k,q_k]$ there exist at most $[2\gamma(\varepsilon)\sqrt{n_i}]$ values of $\kappa_{n_i}(x)$ for which (\ref{eq:gamma_ineq}) holds. Due to Lemma \ref{lem:count_diagr}, given the values $\kappa_{n_i}(c_k) \geqslant \kappa_{n_i}(q_k)$ of $\kappa_n$ at points $c_k$ and $q_k$ respectively, we have 
\begin{equation}
{\sqrt{n_i}\(\kappa_{n_i}(c_k) - \kappa_{n_i}(q_k) + q_k-c_k\)-1 \choose \sqrt{n_i}\(q_k-c_k\)}
\end{equation}
possibilities for the restrictions of $\kappa_n$ onto the interval $[c_k,q_k]$. Let us now bound the number $Y_{n_i}$ of the possibilities of restricting $\kappa_n$ onto the set of intervals $[c_k,q_k]$ from above as
\begin{equation}
\label{eq:Y_upper_b}
Y_{n_i} \leqslant \nprod[1.5]_{k=1}^{s}\(2\gamma(\varepsilon)\)^2n_i {\sqrt{n_i}\(\kappa_{n_i}(c_k) - \kappa_{n_i}(q_k) + q_k-c_k\)-1 \choose \sqrt{n_i}\(q_k-c_k\)}.
\end{equation}
Taking the logarithms of both sides we get,
\begin{align}
\label{eq:Y_upper_b_log}
\log\, Y_{n_i} & \leqslant s\,\log\(\(2\gamma(\varepsilon)\)^2n_i\) \\
& + \sqrt{n_i}\nsum[1.5]_{k=1}^s(q_k-c_k)\(1-\frac{\kappa_{n_i}(q_k)-\kappa_{n_i}(c_k)}{q_k-c_k}\) H\(\frac{-(\kappa_{n_i}(q_k)-\kappa_{n_i}(c_k))}{q_k-c_k-(\kappa_{n_i}(q_k)-\kappa_{n_i}(c_k)}\), \nonumber
\end{align}
where we utilized the bound (\ref{eq:stril_b}). Denote
\begin{equation}
\kappa_{n_i}(x) = y(x)+\gamma(\varepsilon,x),
\end{equation}
where
\begin{equation}
|\gamma(\varepsilon,x)| < \gamma(\varepsilon).
\end{equation}
Divide (\ref{eq:Y_upper_b_log}) by $\sqrt{n_i}$ and let $i \to \infty$. Since the entropy function $H(z)$ is convex, we can use Jensen's inequality to obtain
\begin{equation}
\label{eq:jens_ineq}
\sum_{l=1}^s(\xi_l-z_l)H\(\frac{-z_l}{\xi_l-z_l}\) \leqslant \(\sum_{l=1}^s\xi_l-\sum_{l=1}^sz_l\)H\(\frac{-\sum_{l=1}^sz_l}{\sum_{l=1}^s\xi_l-\sum_{l=1}^sz_l}\),\; \xi_l-z_l \geqslant 0.
\end{equation}
Equation (\ref{eq:L_cont_b}) implies that
\begin{equation}
\label{eq:L_bound}
L\(\frac{y(q_k)-y(c_k)+\gamma(\varepsilon,q_k)-\gamma(\varepsilon,c_k)}{q_k-c_k}\) \leqslant L\(\frac{y(q_k)-y(c_k)}{q_k-c_k}\)+ L\(\frac{\gamma(\varepsilon,q_k)-\gamma(\varepsilon,c_k)}{q_k-c_k}\).
\end{equation}
Using (\ref{eq:jens_ineq}), let us bound the contribution of the last summand of (\ref{eq:L_bound}) to (\ref{eq:Y_upper_b_log}),
\begin{equation}
\label{eq:sec_term_b}
\nsum[1.5]_{k=1}^{s}(q_k-c_k)L\(\frac{\gamma(\varepsilon,q_k)-\gamma(\varepsilon,c_k)}{q_k-c_k}\) \leqslant \nsum[1.5]_{k=1}^{s}(q_k-c_k)L\(\frac{\sum_{k}\gamma(\varepsilon,q_k)-\gamma(\varepsilon,c_k)}{\sum_{k}q_k-c_k +\gamma(\varepsilon,q_k)-\gamma(\varepsilon,c_k)}\).
\end{equation}
Since
\begin{equation}
\sum_{k}q_k-c_k = (\beta-\alpha),
\end{equation}
and
\begin{equation}
\sum_{k}\gamma(\varepsilon,q_k)-\gamma(\varepsilon,c_k) < f(\varepsilon),
\end{equation}
where $f(\varepsilon)$ can be chosen in such a way that
\begin{equation}
f(\varepsilon) \to 0,\quad \varepsilon \to 0,
\end{equation}
we conclude that the right-hand side of (\ref{eq:sec_term_b}) is of the order of
\begin{equation}
(\beta-\alpha)L\(\frac{f(\varepsilon)}{\alpha-\beta}\) \to 0,\quad\varepsilon \to 0.
\end{equation}
Therefore, the contribution of the second summand from (\ref{eq:L_bound}) to the right-hand side of (\ref{eq:Y_upper_b_log}) tends to zero together with $\varepsilon$.

Next, let us demonstrate that $\gamma(\varepsilon,x)\to 0$, for $x\in\{c_k,\;q_k\}$ when $\varepsilon \to 0$ under the condition (\ref{eq:der_cond}). Indeed, choose $x_0 \in\{c_k,\;q_k\}$. For a fixed $\omega>0$, let $h > 0$ be such small that
\begin{equation}
y(x)-y(x_0) < (c+\omega)(x-x_0),\quad 0 < x-x_0 < h.
\end{equation}
Let
\begin{equation}
\kappa_{n_i}(x_0)-y(x_0) = \gamma_1(\varepsilon) > 0.
\end{equation}
Since $\kappa_{n_i}$ is a monotonic function and
\begin{equation}
\kappa_{n_i}(x)-y(x) \geqslant 0,
\end{equation}
for $\max[0,x'] \leqslant x \leqslant x_0$, where
\begin{equation}
x' = \max\[x_0-h,\; \frac{(c+\omega)x_0-\gamma_1(\varepsilon)}{c+\omega}\],
\end{equation}
we obtain
\begin{align}
\label{eq:eps_eq}
\varepsilon &> \int_{\max[0,x']}^{x_0}|\kappa_{n_i}(x)-y(x)|dx = \int_{\max[0,x']}^{x_0}\kappa_{n_i}(x)dx - \int_{\max[0,x']}^{x_0}y(x)dx \\
&\geqslant \frac{\gamma_1(\varepsilon)}{2}\(x_0-\max[0,x']\). \nonumber
\end{align}
The last inequality basically that the integral in (\ref{eq:eps_eq}) is bounded from below by the area of the triangle determined by the lines
\begin{gather}
f_1(x) = y(x_0)+\gamma_1(\varepsilon),\quad f_2(y)=y(x_0), \\ \quad f_3(x)=y(x_0)+\frac{x_0-x}{x_0-x'}\gamma_1(\varepsilon).
\end{gather}
When $\varepsilon \to 0$, (\ref{eq:eps_eq}) implies that $\gamma_1(\varepsilon) \to 0$. Similar reasoning applies if
\begin{equation}
\kappa_{n_i}(x_0)-y(x_0) = \gamma_1(\varepsilon) < 0.
\end{equation}
Using the obtained bounds and applying $\limsup_{n_i\to \infty}$ to the both sides of (\ref{eq:Y_upper_b_log}) divided by $\sqrt{n_i}$, we get
\begin{equation}
\label{eq:new_b}
\limsup_{n_i\to \infty} \frac{\log\, Y_{n_i}}{\sqrt{n_i}} \leqslant \nsum[1.5]_{k}(q_k-c_k)L\(\frac{y(q_k)-y(c_k)}{q_k-c_k}\) + \phi(\varepsilon),
\end{equation}
where $\phi(\varepsilon) \to 0,\; \varepsilon \to 0$.

Next we increase $s$ in such a way that
\begin{equation}
w = \max_{k}(q_k-c_k) \to 0,
\end{equation}
then the first summand in the right-hand side of (\ref{eq:new_b}) becomes
\begin{equation}
\nsum[1.5]_{k=1}^{s}(q_k-c_k)L\(\frac{1}{q_k-c_k}\int_{c_k}^{q_k}y'(x)dx\) = \int_{\alpha}^{\beta}L\(y_c(x)\)dx,
\end{equation}
where $y_c(x)$ is a step function such that
\begin{equation}
y_c(x) = \int_{c_k}^{q_k}y'(x)dx,\quad x \in [c_k,q_k).
\end{equation}
Taking into consideration the last two equations and applying $\liminf_{w\to 0}$ to the both sides of (\ref{eq:new_b}), we obtain
\begin{align}
\label{eq:imp_b}
\limsup_{n\to \infty}\frac{\log\, Y_n}{\sqrt{n}} - \phi(\varepsilon) & \leqslant \liminf_{w\to 0} \int_{\alpha}^{\beta}L\(y_c(x)\)dx \\
&\stackrel{(i)}{\leqslant} \int_{\alpha}^{\beta}\liminf_{w\to 0} L\(y_c(x)\)dx \stackrel{(ii)}{=} \int_{\alpha}^{\beta}L\(\liminf_{w\to 0} y_c(x)\)dx = \int_{\alpha}^{\beta}L\(y'(x)\)dx, \nonumber
\end{align}
where (i) follows from Fatou's lemma, (ii) follows form the continuity of $L$.

Finally, from (\ref{eq:imp_b}) we have
\begin{equation}
\limsup_{n\to \infty}\frac{\log\, Y_n}{\sqrt{n}} \leqslant \int_{[\alpha,\beta]}L\(y'(x)\)dx + \phi(\varepsilon),
\end{equation}
and (\ref{eq:lem_5_main}) follows.
\end{proof}

\begin{proof}[Proof of Lemma \ref{lem:lower_b_q_n}]
Consider a subsequence $n_i$ on which the $\liminf$ is attained in (\ref{lem:lemma_6_main}). Define the interval $[\alpha,\beta]$ exactly as in (\ref{eq:alphabeta_def}), partition it into $s$ equal intervals $[a_j,b_j],\;j=1,\dots,s$, and denote their lengths by
\begin{equation}
\Delta = b_j-a_j = \frac{\beta-\alpha}{s}.
\end{equation}
Above we focused on the upper bound and considered an excessive number of functions $\kappa_n$. Indeed, some of $\kappa_n$ did not belong to $U_{\varepsilon}(y)$, moreover, some of them could not serve as boundaries of the polyominoes under consideration because since their areas could be larger than the area $A^{NT_E}$ of the quadrant of $G$ at hand. Now we treat the lower bound and must only count those $\kappa_n$ that are the boundaries of convex polyominoes of area $A^{NT_E}$ belonging to $U_{\varepsilon}(y)$.

Consider those $\kappa_n$ which for every $x_0 \in \{a_j,b_j\}$ take the same value $\kappa_n(x_0)$ and satisfy the condition
\begin{equation}
\label{eq:cond_25}
|\kappa_{n}(x_0)-y(x_0)| \leqslant \frac{1}{\sqrt{n}}.
\end{equation}
Assume we build our diagram from left to right. Two issues can happen during the course of such construction under the condition (\ref{eq:cond_25}): 
\begin{enumerate}[1)]
\item we can exhaust the area $A^{NT_E}$ before we reach the rightmost point of $y$,
\item we can reach the rightmost point of $y$ having diagram of a smaller area than required.
\end{enumerate}
Later we show that in the case 1) the remaining area is small and can be spread above the constructed diagram without pushing it beyond $U_{\varepsilon}(y)$, and in the case 2) the total length of the remaining not covered intervals $[a_j,b_j]$ can be made arbitrarily small. Roughly speaking, we need to show that the areas under the curves $\kappa_n(x)$ and $y(x)$ for $x \in [0,\eta],\; \eta < \beta$ are close, where $\eta$ is the point where $\kappa_n$ becomes zero for the first time. 
During the construction of a diagram from left to right as described above, if we reached $\beta$, then we spread the remaining cells above the already constructed diagram. Let us show that the total area of these extra cells can be made arbitrarily small. Indeed, since $\kappa_n \in U_{\varepsilon}(y)$ the area excess can be expressed as
\begin{equation}
\label{eq:cond_27_a}
A^{NT_E} - \int_\alpha^\beta \kappa_n(x) dx = \int_\alpha^\beta y(x) - \kappa_n(x) dx \leqslant \xi,
\end{equation}
where $\xi\to 0$ when $\Delta$ shrinks and $n$ grows. 

Next we recycle the ideas used for the proof of (\ref{eq:eps_eq}), but this time we will also upper bound the $L^1$-distance between the curves. For two monotonically non-increasing functions $z_1(x)$ and $z_2(x)$, such that $|z_1(x)-z_2(x)|\leqslant 1/\sqrt{n}$ for $x=a,\;b$ where $a<b$ are arbitrary reals, we clearly have
\begin{equation}
\label{eq:ineq_28}
\int_a^b|z_1(x)-z_2(x)|dx \leqslant (b-a)\(z_1(x)-z_2(x)+\frac{2}{\sqrt{n}}\).
\end{equation}

Assume that (\ref{eq:cond_25}) holds for all $x\in\{a_j,b_j\}$, then from (\ref{eq:ineq_28}) we get
\begin{align}
\label{eq:eq_delta_29}
\int_{[\alpha,\beta]} |\kappa_n(x)-y(x)| dx & = \nsum[1.5]_{j=1}^s \int_{[a_j,b_j]} |\kappa_n(x)-y(x)| dx \\ 
&\leqslant \nsum[1.5]_{j=1}^s (b_j-a_j)\(y(a_j)-y(b_j)+\frac{2}{\sqrt{n}}\) \leqslant \Delta\(y(\alpha)-y(\beta)+\frac{2}{\sqrt{n}}\). \nonumber
\end{align}

Let now $\eta < \beta$ so that $\kappa_n(x)=0,\; |y(x)-\kappa_n(x)|>\frac{1}{\sqrt{n}}$ for $x>\eta$ and $\kappa_n(x)>0$ for $x<\eta$. This implies that
\begin{equation}
\int_0^{\eta}\kappa_n(x)dx = A^{NT_E},
\end{equation}
\begin{equation}
\int_0^{\eta}y(x)dx = A^{NT_E}-\rho < \int_0^\beta y(x)dx,
\end{equation}
Now
\begin{equation}
\int_0^{\eta}|\kappa_n(x)-y(x)| dx > \left|\int_0^{\eta}\kappa_n(x)-y(x) dx\right| > \rho.
\end{equation}
On the other hand, the left-hand side of the last inequality is bounded from above by the expression in the right-hand side of (\ref{eq:eq_delta_29}). As a consequence, for small enough $\Delta$, the value of $\rho$ must be also small,
\begin{equation}
\rho \to 0,\quad \Delta \to 0.
\end{equation}
This is only possible if $\eta$ is large enough. Let
\begin{equation}
\theta = \beta - \eta,
\end{equation}
then
\begin{equation}
\int_{\eta}^{\beta}y(x)dx \to 0,\quad \Delta \to 0.
\end{equation}
As we have already mentioned above, for $\Delta$ small enough, condition (\ref{eq:cond_27_a}) will hold with $\xi$ small. Overall, (\ref{eq:cond_27_a}) and (\ref{eq:eq_delta_29}) imply that the constructed $\kappa_n(x)$ will belong to $U_{\varepsilon}(y)$ under the appropriate choice of $\Delta$.

Let $Y_{n_i}$ be the number of admissible diagrams we count, then it is lower bounded by the number of $\kappa_{n_i}$ satisfying (\ref{eq:cond_27_a}) and (\ref{eq:eq_delta_29}). By Lemma \ref{lem:count_diagr}, the number of possible restrictions of $\kappa_{n_i}(x)$ onto $[a_j,b_j]$ for given $\kappa_{n_i}(a_j)$ and $\kappa_{n_i}(b_j)$ is
\begin{equation}
{\sqrt{n_i}\(\kappa_{n_i}(a_j) - \kappa_{n_i}(b_j) + b_j-a_j\)-1 \choose \sqrt{n_i}\(b_j-a_j\)}.
\end{equation}
The number of such restrictions on $[\alpha,\beta]$, for the given $\kappa_{n_i}(a_j)$ and $\kappa_{n_i}(b_j)$, is lower bounded by the product
\begin{equation}
\nprod[1.5]_{j=1}^{l} {\sqrt{n_i}\(\kappa_{n_i}(a_j) - \kappa_{n_i}(b_j) + b_j-a_j\)-1 \choose \sqrt{n_i}\(b_j-a_j\)},
\end{equation}
where $l$ is calculated as follows. Let $\eta < \beta$ and $r$ be the largest number such that $\mu\([a_r,b_r] \cap [\eta,\beta]\)=0$, then set $l=s$. Otherwise set $l = r+1$ and $a_{l}=a_{r+1},\; b_{l}=\eta$. Clearly,
\begin{equation}
\mu\(\bigcup_{j=l}^s[a_j,b_j]\) < \beta - \eta + b_{l} - a_{l} < \theta + \Delta \to 0,\quad \Delta \to 0.
\end{equation}
Using the bound
\begin{equation}
\log {{m \choose s}} \geqslant mH\(\frac{s}{m}\)+o(m),\quad m \to \infty,
\end{equation}
following from (\ref{eq:stril_b}) and taking into account that
\begin{equation}
|\kappa_{n_1}(x)-y(x_0)| \leqslant \frac{1}{\sqrt{n_i}},\quad x=a_j,\; b_j,\; j\leqslant l,
\end{equation}
we get
\begin{equation}
\frac{\log\, Y_{n_i}}{\sqrt{n_i}} \geqslant \nsum[1.5]_{j=1}^{l}\(1-\frac{y(b_j)-y(a_j)}{b_j-a_j}+O\(\frac{1}{\sqrt{n_i}}\)\)H\(\frac{-\frac{y(b_j)-y(a_j)}{b_j-a_j}+O\(\frac{1}{\sqrt{n_i}}\)}{1-\frac{y(b_j)-y(a_j)}{b_j-a_j}+O\(\frac{1}{\sqrt{n_i}}\)}\).
\end{equation}
Let $n_i \to \infty$ and recall the definition of $L$ from (\ref{eq:L_def}) to obtain,
\begin{align}
\liminf_{n\to\infty}\frac{\log\, Y_{n_i}}{\sqrt{n_i}} &\geqslant \nsum[1.5]_{j=1}^{l}(b_j-a_j)L\(\frac{y(b_j)-y(a_j)}{b_j-a_j}\) = \nsum[1.5]_{j=1}^{l}(b_j-a_j)L\(\frac{1}{b_j-a_j}\int\limits_{a_j}^{b_j}y'(x)dx\) \nonumber \\
& \stackrel{(i)}{\geqslant} \nsum[1.5]_{j=1}^{l}\int\limits_{a_j}^{b_j}L\(y'(x)\)dx = \int\limits_{\alpha}^{\min[\eta,\beta]}L\(y'(x)\)dx,
\end{align}
where in (i) we utilized the convexity of $L$ together with Jensen's inequality. Now let $\Delta \to 0$ to obtain
\begin{equation}
\liminf_{n \to \infty} \frac{\log\, Y_n}{\sqrt{n}} \geqslant \int\limits_{T_NT_E}L\(y'(x)\)dx + \psi(\varepsilon),
\end{equation}
which completes the proof.
\end{proof}

\begin{rem}
Assume now that instead of fixed area we deal with convex polyominoes of fixed perimeter. This case is even simpler since for most of the polyominoes the perimeter constraint will never be active. Indeed, the perimeter constraint only plays role if the diagram $\kappa_n(x)$ hits the boundary of the circumscribing rectangle. By appropriate choice of the extreme points of the polyomino we can easily satisfy this requirement, thus the bulk of the diagram will not be affected by it. The same applies to the polyominoes with both perimeter and area fixed (unless $\int_{\Gamma}(1+|\tan \Gamma|)H\(\frac{1}{1+|\cot \Gamma|}\)dx = 0$, which is not the case we consider).
\end{rem}

\bibliographystyle{IEEEtran}
\bibliography{ilya_bib}
\end{document}